\numberwithin{equation}{section}
\newcommand{\e}{\varepsilon}
\newcommand{\tr}{\operatorname{tr}}
\newcommand{\R}{\mathbb{R}}
\newcommand{\N}{\mathbb{N}}
\newtheorem{de}{Definition}
\newtheorem{lem}{Lemma}
\newtheorem{prop}{Proposition}
\newtheorem{thm}{Theorem}
\newtheorem{cor}{Corollary}
\theoremstyle{remark}
\newtheorem{rem}{Remark}
\newcommand{\lap}{\Delta}
\renewcommand{\div}{\operatorname{div}}
\newcommand{\osc}{\operatorname{osc}}
\title{Lipschitz regularity for a homogeneous doubly nonlinear PDE}
\author{Ryan Hynd\footnote{rhynd@math.upenn.edu, Department of Mathematics, University of Pennsylvania, Philadelphia, Pennsylvania 19104.  Partially supported by NSF grant DMS-1301628.}\; and Erik Lindgren\footnote{erik.lindgren@math.uu.se, Department of Mathematics, Uppsala University, Box 480
751 06, Uppsala, Sweden. Supported by the Swedish Research Council, grant no. 2012-3124 and 2017-03736. Part of this work was carried out when the second author was visiting University of Pennsylvania. The math department and its facilities are kindly acknowledged.}}  
\begin{document}\maketitle

\begin{abstract}\noindent   We study the doubly nonlinear PDE
$$
|\partial_t u|^{p-2}\,\partial_t u-\div(|\nabla u|^{p-2}\nabla u)=0.
$$
This equation arises in the study of extremals of Poincar\'e inequalities in Sobolev spaces. We prove spatial Lipschitz continuity and H\"older continuity in time of order $(p-1)/p$ for viscosity solutions. As an application of our estimates, we obtain pointwise control of the large time behavior of solutions.
\end{abstract}

\maketitle
\section{Introduction} We the study the local regularity of viscosity solutions of the doubly nonlinear parabolic equation
\begin{equation}\label{eq:art}
|\partial_t u|^{p-2}\,\partial_t u-\lap_p u=0 \, ,
\end{equation}
for $p\in [2,\infty)$. Here $\lap_p$ is the $p$-Laplace operator
$$
\lap_p u = \div(|\nabla u|^{p-2}\nabla u),
$$
the first variation of the functional
$$
u\mapsto \int_\Omega |\nabla u|^p dx.
$$
The first occurence of \eqref{eq:art} that we have found is in a footnote in \cite{KL96}. Our interest in \eqref{eq:art} relies on the connection to the eigenvalue problem for the $p$-Laplacian. See our previous work \cite{HL16}, \cite{HL17} and also Theorem \ref{LargeTthm} in Section \ref{sec:largetime}. This eigenvalue problem amounts to studying extremals of the Rayleigh quotient
\begin{equation}\label{rayleigh}
\lambda_{p} = \inf_{u\in W_0^{1,p}(\Omega)\setminus \{0\}}\frac{\displaystyle\int_\Omega |\nabla u|^p dx}{\displaystyle\int_\Omega |u|^p dx}.
\end{equation}
Here $\Omega\subset \R^n$ is a bounded and open set. Extremals are often called \emph{ground states}. This extremal problem is naturally equivalent to finding the optimal constant in the Poincar\'e inequality for $W^{1,p}_0(\Omega)$.

\subsection{Main results} The first of our results is spatial Lipschitz continuity and H\"older continuity in time of order $(p-1)/p$. These are proved using Ishii-Lions' method, introduced in \cite{IL90}. To the best of our knowledge, this is the first pointwise regularity result for this equation. In order to state our first theorem we introduce the notation $$Q_r(x_0,t_0)=B_r(x_0)\times (t_0-r^\frac{p}{p-1},t_0].$$

\begin{thm}\label{thm:main} Let $p\geq 2$, $\Omega$ be a bounded and open set in $\R^n$ and $I$ a bounded interval. Suppose $u$ is a viscosity solution of \eqref{eq:art} in $\Omega  \times I$. Then 

$$
|u(x,t)-u(y,s)|\leq \frac{C(n,p)}{R}\|u\|_{L^\infty(Q_{2R}(x_0,t_0))}\left(|x-y| + |t-s|^\frac{p-1}{p} \right).
$$
for any $(x,t),\;(y,s)\in Q_{R/2}(x_0,t_0)$ and for every $R$, $x_0$ and $t_0$ such that $Q_{2R}(x_0,t_0)\Subset\Omega\times I$.
\end{thm}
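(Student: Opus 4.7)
My plan is to use the doubling-of-variables method of Ishii-Lions. I will first exploit the invariance of \eqref{eq:art} under the rescaling $u(x,t)\mapsto \lambda\,u(Rx,R^{p/(p-1)}t)$ to normalize to $R=1$, $(x_0,t_0)=(0,0)$, and $\|u\|_{L^\infty(Q_2)}\leq 1$. The task then reduces to producing a constant $L=L(n,p)$ such that
$$
u(x,t) - u(y,s) \leq L\bigl(|x-y| + |t-s|^{(p-1)/p}\bigr)\qquad\text{for all } (x,t),(y,s)\in Q_{1/2}.
$$

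Next I will introduce the auxiliary function
$$
\Psi(x,y,t,s)=u(x,t)-u(y,s) - L_1\,\omega(|x-y|) - L_2\,|t-s|^{(p-1)/p} - \frac{\mu}{2}\bigl(|x|^{2}+|y|^{2}+t^{2}+s^{2}\bigr),
$$
where $\omega(r)=r-\kappa r^{1+\gamma}$ is a strictly concave modulus on a short interval (with $\kappa,\gamma\in(0,1)$ small, to be fixed), $L_1,L_2$ are large parameters, and $\mu>0$ localizes the maximum. If $\max\Psi\leq 0$ for every small $\mu$, the theorem follows by sending $\mu\downarrow 0$. Arguing by contradiction, I will assume $\max\Psi>0$; since $\Psi(x,x,t,t)\leq 0$, the maximum must then be attained at an interior point $(\hat x,\hat y,\hat t,\hat s)$ with $\hat x\neq \hat y$.

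I will then apply the parabolic Crandall-Ishii lemma to produce jets $(\sigma_1,p_1,X_1)\in\overline{\mathcal P}^{2,+}u(\hat x,\hat t)$ and $(\sigma_2,p_2,X_2)\in\overline{\mathcal P}^{2,-}u(\hat y,\hat s)$, with both $p_i$ approximately $L_1\,\omega'(|\hat x-\hat y|)\,\hat e$, where $\hat e=(\hat x-\hat y)/|\hat x-\hat y|$, and with a matrix inequality that, by the strict concavity of $\omega$, forces $X_1-X_2$ to have a strongly negative eigenvalue in the direction $\hat e$ of size $\sim L_1\,|\omega''(|\hat x-\hat y|)|\sim L_1\kappa\gamma|\hat x-\hat y|^{\gamma-1}$. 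Subtracting the two viscosity inequalities for \eqref{eq:art} gives
$$
|\sigma_1|^{p-2}\sigma_1 - |\sigma_2|^{p-2}\sigma_2 \;\leq\; \tr\bigl(A(p_1)X_1\bigr) - \tr\bigl(A(p_2)X_2\bigr),
$$
with $A(q)=|q|^{p-2}\bigl(I+(p-2)\,\hat q\otimes\hat q\bigr)$. The left-hand side is nonnegative by monotonicity of $r\mapsto |r|^{p-2}r$. I will split the right-hand side as $\tr\bigl(A(p_1)(X_1-X_2)\bigr)+\tr\bigl((A(p_1)-A(p_2))X_2\bigr)$; since $p_1-p_2=O(\mu)$ while $|p_i|\gtrsim L_1$, the second term is negligible, and the anisotropic ellipticity of $A(p_1)$ combined with the Hessian estimate bounds the first by $-c(n,p)L_1^{p-1}\kappa\gamma|\hat x-\hat y|^{\gamma-1}$ plus controllable corrections. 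Choosing $L_1$ sufficiently large (depending on $L_2$, $n$, $p$) will then contradict the sign of the left-hand side.

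The main obstacle will be the degeneracy of $A(q)$ as $|q|\to 0$: the argument only closes when both jet gradients stay uniformly away from zero, which is exactly why the concave modulus $\omega$ and the large parameter $L_1$ are essential — they guarantee $|p_i|\gtrsim L_1$ at the maximum point. A secondary difficulty will be processing the nonlinear time term $|\sigma|^{p-2}\sigma$; the exponent $(p-1)/p$ is the unique choice matching the parabolic scaling of \eqref{eq:art}, and it is what allows the time and spatial penalties in $\Psi$ to be balanced through a single Ishii-Lions estimate.
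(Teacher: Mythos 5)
Your overall strategy (Ishii--Lions doubling with a strictly concave modulus) is the right one and matches the paper's spatial argument, but three steps in your plan fail as written. First, the claim that the left-hand side $|\sigma_1|^{p-2}\sigma_1-|\sigma_2|^{p-2}\sigma_2$ is nonnegative ``by monotonicity'' is false: the localization terms give $\sigma_1-\sigma_2=\mu(\hat t+\hat s)\le 0$ in your cylinder, so monotonicity gives the \emph{wrong} sign, and since $\sigma_1,\sigma_2$ contain the unbounded quantity $L_2\tfrac{p-1}{p}|\hat t-\hat s|^{-1/p}$ the error $\big||\sigma_1|^{p-2}\sigma_1-|\sigma_2|^{p-2}\sigma_2\big|\sim(p-1)|\sigma_i|^{p-2}\mu|\hat t+\hat s|$ is not controllable without first bounding $|\sigma_i|$. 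This is exactly what Step 4 of the paper's Proposition \ref{prop:loglip} does: it uses the equation together with the bounds on $X,Y,a,b$ to bound $|\alpha_i|$ \emph{before} estimating the difference of the $(p-1)$-powers. Second, you cannot both ``send $\mu\downarrow 0$'' and keep the maximum interior: if $\max\Psi>0$ then $\tfrac{\mu}{2}|\hat x|^2\le\osc u\le 1$, so interior localization forces $\mu$ of order one. Consequently $|p_1-p_2|=O(\mu)$ is \emph{not} negligible, and your correction term $\tr\bigl((A(p_1)-A(p_2))X_2\bigr)\sim L_1^{p-2}\delta^{-1}$ dominates the good term $\sim L_1^{p-1}|\omega''(\delta)|\sim L_1^{p-1}\delta^{\gamma-1}$ as $\delta=|\hat x-\hat y|\to 0$, no matter how large $L_1$ is. The paper's resolution is a bootstrap you omit: a first pass with $\phi(r)=-r\ln r$ yields a log-Lipschitz (hence $C^{2\gamma-2}$) estimate, which then shows that at the maximum $|\hat x|,|\hat y|\lesssim\delta^{\gamma-1}$, taming the correction term to the same order $\delta^{\gamma-2}$ as the good term so that a large constant wins.

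Third, doubling the time variable with the penalty $L_2|t-s|^{(p-1)/p}$ introduces degenerate cases you do not handle: the penalty is not $C^1$ at $t=s$, so the parabolic theorem of sums cannot be applied if $\hat t=\hat s$; and ruling out $\hat t=\hat s$ (respectively $\hat x=\hat y$ with $\hat t\ne\hat s$) requires the spatial (respectively temporal) estimate you are in the middle of proving --- a circularity. Moreover it is not clear the time part of a coupled Ishii--Lions argument closes at all for this equation, because the large time-jet $|\sigma_i|^{p-1}\sim L_2^{p-1}|\hat t-\hat s|^{-(p-1)/p}$ must beat $\tr(A(p_1)X_1)\lesssim L_1^{p-1}/\delta$ uniformly in the relative sizes of $|\hat t-\hat s|$ and $\delta$, which fails in part of the range. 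The paper decouples the two issues: spatial Lipschitz regularity is proved first (with $t=s$ in the doubling), and the $C^{(p-1)/p}$ time regularity is then obtained by comparison with the explicit barrier $\eta+A(t-t_0)+B|x|^{p/(p-1)}$, whose parameters are calibrated using the spatial Lipschitz constant. You should either adopt that two-stage structure or supply a genuinely new argument for the degenerate cases and for the lower bound on the time terms.
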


Our second result concerns the large time behavior of solutions. This was investigated in our previous work \cite{HL16}. In particular, there exists a ground state $w$ such that $$e^{\lambda_p^\frac{1}{p-1} t}u(x,t)\to w$$ in $W_0^{1,p}(\Omega)$, when $u$ solves
\begin{equation}\label{pParabolic}
\begin{cases}
|\partial_t u|^{p-2}\,\partial_t u=\Delta_p u,\quad  \Omega\times (0,\infty), \\
u=0,\quad 
 \partial \Omega\times [0,\infty),\\
 u=g,\quad 
 \Omega\times\{0\},
\end{cases}
\end{equation}
see Theorem \ref{LargeTthm}. As a consequence of this and Theorem \ref{thm:main}, we obtain that this convergence is uniform.
\begin{thm}\label{thm:largetime} Let $p\in [2,\infty)$, $\Omega$ be a bounded and regular\footnote{Regular in the sense that any groundstate is continuous up to the boundary. This is true for instance if  $\partial\Omega$ is Lipschitz.} domain and assume that $g\in W_0^{1,p}(\Omega)\cap C(\overline\Omega)$ satisfies $|g|\leq \phi$, where $\phi$ is a ground state. If $u$ is a  viscosity solution of \eqref{pParabolic}, then there is a ground state $w$ such that 
$$e^{\lambda_p^\frac{1}{p-1} t}u(x,t)\to w,$$
uniformly in $\overline\Omega$.
\end{thm}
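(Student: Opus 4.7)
The strategy is to combine the $W^{1,p}_0$-convergence provided by Theorem \ref{LargeTthm} with the interior Lipschitz estimate of Theorem \ref{thm:main}, closing the argument near $\partial\Omega$ by means of an explicit separated-variables barrier. The first step is to observe that, because $\phi$ is a ground state, the function $(x,t)\mapsto \pm\phi(x)\,e^{-\lambda_p^\frac{1}{p-1} t}$ is a separated-variables viscosity solution of \eqref{pParabolic} which dominates $\pm g$ at $t=0$ and shares zero boundary data with $u$. The comparison principle for viscosity solutions (as used in \cite{HL16}) therefore yields
$$
|u(x,t)|\leq \phi(x)\,e^{-\lambda_p^\frac{1}{p-1} t}\quad\text{on }\overline\Omega\times[0,\infty),
$$
so that the rescaled function $v(x,t):=e^{\lambda_p^\frac{1}{p-1} t}u(x,t)$ satisfies $|v|\leq \phi$ uniformly in $t$.

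Given any compact $K\Subset\Omega$, pick $R\sim\dist(K,\partial\Omega)$. Applying Theorem \ref{thm:main} on cylinders $Q_{2R}(x_0,t)$ with $x_0\in K$ and using the $L^\infty$ bound above, one obtains
$$
|u(x,t)-u(y,t)|\leq \frac{C\|\phi\|_\infty}{R}\,e^{-\lambda_p^\frac{1}{p-1}(t-(2R)^{p/(p-1)})}\,|x-y|,
$$
which, after multiplying through by $e^{\lambda_p^\frac{1}{p-1} t}$, shows that $\{v(\cdot,t)\}_{t\geq T_0}$ is equi-Lipschitz on $K$ with a constant depending only on $K$ and $\|\phi\|_\infty$. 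From any sequence $t_n\to\infty$, Arzela--Ascoli extracts a subsequence along which $v(\cdot,t_n)$ converges uniformly on compact subsets of $\Omega$; the $L^p(\Omega)$-convergence supplied by Theorem \ref{LargeTthm} identifies the limit as $w$, and because this limit is independent of the subsequence, the whole family $v(\cdot,t)$ converges to $w$ uniformly on every $K\Subset\Omega$.

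To upgrade this to uniform convergence on $\overline\Omega$, passing to the pointwise limit in $|v(\cdot,t)|\leq\phi$ (using the continuity of $w$ up to $\partial\Omega$, which is granted by the regularity assumption on $\Omega$) yields $|w|\leq\phi$ on $\overline\Omega$. Given $\e>0$, the continuity of $\phi\in C(\overline\Omega)$ together with $\phi|_{\partial\Omega}=0$ lets me choose $K_\e\Subset\Omega$ with $\phi<\e/2$ off $K_\e$, so that $|v(x,t)-w(x)|\leq 2\phi(x)<\e$ there for every $t$; the previous step handles $K_\e$ for $t$ sufficiently large. The main obstacle is the initial barrier step: the comparison of viscosity sub- and supersolutions against the explicit exponentially decaying solution must be established for the doubly nonlinear operator, but this is precisely the comparison principle at the authors' disposal from \cite{HL16}.
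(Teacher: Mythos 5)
Your proposal is correct and follows essentially the same route as the paper: comparison with the separated solution $\phi(x)e^{-\lambda_p^{1/(p-1)}t}$ to get the bound $|e^{\lambda_p^{1/(p-1)}t}u|\leq\phi$, interior equicontinuity from Theorem \ref{thm:main}, Arzel\`a--Ascoli with the limit identified through the $W^{1,p}_0$-convergence of Theorem \ref{LargeTthm}, and the smallness of $\phi$ near $\partial\Omega$ to pass from locally uniform to uniform convergence on $\overline\Omega$. The only cosmetic difference is that the paper phrases the boundary step as equicontinuity of the translated functions $v^k$ on $\overline\Omega\times[0,1]$, whereas you give the explicit $\e$-argument off a compact set; the substance is the same.
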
 
We do not expect the estimates in Theorem \ref{thm:main} to be sharp. In our opinion, solutions are likely to be at least continuously differentiable in space, even though we are unable to verify this at the moment. Concerning time regularity, it may be a very delicate task to obtain any higher H\"older exponent. See the next section for a comparison with related equations.





\subsection{Known results}
Doubly nonlinear equations such as \eqref{eq:art} have mostly been studied from a functional analytic point of view. See for instance \cite{MRS} and \cite{Ste}. However, the pointwise properties and in particular the regularity theory has not been developed. Needless to say, the nonlinearity in the time derivative presents a genuine challenge. A related result can be found in \cite{HL16b}, where H\"older estimates for some H\"older exponent are proved for the doubly nonlinear non-local equation
$$
|\partial_t u|^{p-2}\,\partial_t u+(-\Delta_p)^s u=0.
$$
The large time behavior of solutions has a natural connection to the Poincar\'e inequality in the fractional Sobolev space $W^{s,p}$, the non-local counterpart of  \eqref{rayleigh}.

The related $p$-parabolic equation 
$$\partial_t u-\Delta_p u=0,$$
has been given vast attention the past 30 years. In contrast to \eqref{eq:art}, this equation is not homogeneous. Due to the the linearity in the time derivative, the notion of weak solutions turns out to be more useful than for \eqref{eq:art}. We refer to \cite{DiBbook} for an overview of the regularity theory. The best local regularity known is spatial $C^{1,\alpha}$-regularity for some $\alpha>0$ and $C^{1/2}$-regularity in time. Neither of these exponents are known to be sharp. Due to the explicit solution
$$
u=t\cdot n-\frac{p-1}{p}|x|^\frac{p}{p-1}, 
$$
where $n$ is the dimension, it is clear that solutions cannot be better than $C^{1,1/(p-1)}$ in space. 

Recently, Ishii-Lions' method has been used for equations involving the $p$-Laplacian. In \cite{IJS16}, the authors used it to study the regularity of solutions of 
$$
\partial_t u = |\nabla u|^\kappa\Delta_p u,\quad \kappa \in (1-p,\infty).
$$
In the recent papers \cite{APR17} and \cite{AP18} it is used for the equations
$$
|\nabla u|^{2-p}\Delta_p u = f,\quad \partial_t u-|\nabla u|^{2-p}\Delta_p u = f.
$$

\subsection{The idea of the proof} For many elliptic or parabolic equations including \eqref{eq:art}, it is possible to prove a comparison principle. When working with viscosity solutions, this is usually accomplished by doubling the variables. This amounts to ruling out that
$$
\sup_{x,y}\,\left(u(x)-v(y)-\phi(|x-y|)\right)>0
$$
when $u$ is a subsolution, $v$ is a supersolution, $u\leq v$ on the boundary and $\phi$ is appropriately chosen. For uniformly elliptic equations the choice $\phi(r)=r^2$ is suitable to prove a comparison principle.

It turns out that a similar approach can also give continuity estimates. This was first done in \cite{IL90}. A spatial continuity estimate of order $\phi(r)$ for a solution $u$ of \eqref{eq:art} is saying that 
$$
\sup_{x,y,t}\,\left(u(x,t)-u(y,t)-\phi(|x-y|)\right)\leq 0.
$$
In order to prove this, we assume towards a contradiction that
$$
\sup_{x,y,t}\,\left(u(x,t)-u(y,t)-\phi(|x-y|)\right)> 0.
$$
In this paper, we work with the choices $\phi(r)\approx r|\ln r|$ and $\phi(r)\approx r$. This gives a log-Lipschitz and a Lipschitz estimate in the spatial variables. In contrast to the case $\phi(r)=r^2$, $\phi$ is here chosen so that it is strictly concave.  The spatial regularity can be used to construct a suitable supersolution that yields the desired time regularity.

\subsection{Plan of the paper}
The plan of the paper is as follows. In Section \ref{sec:visc}, we introduce some notation and the notion of viscosity solutions.  This is followed by Section \ref{sec:loglip}, where we prove log-Lipschitz continuity in space. In Section \ref{sec:lip}, we improve this to Lipschitz continuity. This result is then used in Section \ref{sec:time}, where we prove the corresponding H\"older regularity in time. We combine these results in Section \ref{sec:main}, where we prove our main regularity theorem. Finally, in Section \ref{sec:largetime}, we study the large time behavior.


\section{Notation and prerequisites}\label{sec:visc} 
Throughout the paper, we will use the notation $$Q_r(x_0,t_0)=B_r(x_0)\times (t_0-r^\frac{p}{p-1},t_0]$$ and $Q_r=B_r(0)\times (-r^\frac{p}{p-1},0]$. These are cylinders reflecting the natural scaling of solutions to \eqref{eq:art}. We will also use
the matrix norm
$$
\|X\|=\sup_{\|\xi\|\leq 1}|X\xi|.
$$
In addition, we will, for any subset of $Q\subset \R^{n+1}$, use the notation
$$
\osc_{Q} u=\sup_{Q} u-\inf_{Q} u.
$$
For completeness we include the definition of viscosity solutions:
\begin{de}
Let $\Omega \in \R^n$ be an open set and $I\in \R$ be a bounded interval. A function which is upper semicontinuous in ${\Omega\times I}$ is a \emph{subsolution} of 
$$  
|\partial_t v|^{p-2}\partial_t v-\lap_p v\,\leq 0 \text{ in $\Omega \times I$}
$$
if the following holds: whenever $(x_0,t_0)\in \Omega\times I$ and $\phi\in C^{2,1}_{x,t}({B_r(x_0)}\times (t_0-r,t_0])$  for some $r>0$ are such that
$$
\phi(x_0,t_0)=v(x_0,t_0), \quad \phi(x,t)\geq v(x,t) \text{ for $(x,t)\in B_r(x_0)\times (t_0-r,t_0]$}
$$
then
$$
|\partial_t \phi(x_0,t_0) |^{p-2}\partial_t \phi(x_0,t_0)-\lap_p\phi\, (x_0,t_0)\leq 0.
$$
A \emph{supersolution} is defined similarly and a \emph{solution} is a function which is both a sub- and a supersolution.
\end{de}
\begin{rem} The notion of viscosity solutions may also be formulated in terms of so called jets: $v$ is a viscosity subsolution in $\Omega \times I$ if $(\alpha,a,X)\in \overline{\mathcal{P}}^{2,+}_Q v\,(x_0,t_0)$ for $(x_0,t_0)\in Q$ for some cylinder $Q\subset \Omega\times I$ implies
$$
|\alpha|^{p-2}\alpha-|a|^{p-2}\tr (L(a)X)\leq 0, \quad L(a)=I+(p-2)\frac{a\otimes a}{|a|^2}.
$$
See \cite{CIL92} and \cite{DFO11} for further reading. Here and throughout the paper we will use the notation used in \cite{DFO11}.
\end{rem}
In \cite{HL16}, the following comparison principle is mentioned. The proof of this result is identical to for instance the proof of Theorem 4.7 of \cite{JLM01}.
\begin{prop}\label{UsualComparison}
Assume $v\in USC(\overline{\Omega}\times[0,T))$ and $w\in LSC(\overline{\Omega}\times[0,T))$. Suppose the inequality
$$
|\partial_t v|^{p-2}\partial_t v-\Delta_pv\le 0\le |\partial_t w|^{p-2}\partial_t w-\Delta_pw, \quad \Omega\times(0,T)
$$
holds in the sense of viscosity solutions and $v(x,t)\le w(x,t)$ for $(x,t)\in \partial\Omega\times[0,T)$ and for $(x,t)\in \Omega\times\{0\}$. Then 
$$
v\le w
$$
in $\Omega\times(0,T).$
\end{prop}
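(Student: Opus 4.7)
The plan is to run the standard Crandall--Ishii--Lions doubling-of-variables argument adapted to the parabolic setting, as in Theorem~4.7 of \cite{JLM01}. Assume toward a contradiction that $M := \sup_{\Omega \times [0,T)}(v-w) > 0$. First I would reduce to the case of a strict subsolution by replacing $v$ with $v - \eta/(T-t)$ for small $\eta > 0$; on the set where $v-w$ is close to $M$ this still has positive supremum (for $\eta$ small), and in the parabolic viscosity inequality it introduces an extra $+ \eta/(T-t)^2$ on the left-hand side. The ordering on $\partial \Omega \times [0,T)$ and $\Omega \times \{0\}$, combined with upper/lower semicontinuity, ensures that the supremum is attained at some interior point in space-time.

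Next I would double the variables: for parameters $\varepsilon, \sigma > 0$ consider
\begin{equation*}
\Phi_{\varepsilon,\sigma}(x,t,y,s) = v(x,t) - w(y,s) - \frac{|x-y|^2}{2\varepsilon} - \frac{|t-s|^2}{2\sigma} - \frac{\eta}{T-t}
\end{equation*}
and let $(\hat x, \hat t, \hat y, \hat s)$ be a maximizer on $\overline{\Omega}^2 \times [0,T)^2$. Standard estimates give that $|\hat x - \hat y|^2/\varepsilon \to 0$ and $|\hat t - \hat s|^2/\sigma \to 0$, and that the space-time limit point lies in the interior, so in particular $\hat x, \hat y \in \Omega$ and $\hat t, \hat s \in (0,T)$ for all small enough $\varepsilon, \sigma$. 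The parabolic Ishii--Jensen lemma then produces jets
$(\alpha, a, X) \in \overline{\mathcal{P}}^{2,+} v(\hat x, \hat t)$ and $(\beta, b, Y) \in \overline{\mathcal{P}}^{2,-} w(\hat y, \hat s)$
with $a = b = (\hat x - \hat y)/\varepsilon$, with $\alpha - \beta \geq \eta/(T-\hat t)^2$, and with matrices satisfying
\begin{equation*}
\begin{pmatrix} X & 0 \\ 0 & -Y \end{pmatrix} \leq \frac{3}{\varepsilon} \begin{pmatrix} I & -I \\ -I & I \end{pmatrix},
\end{equation*}
which in particular implies $X \leq Y$.

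Assuming for the moment that $a \neq 0$, the two viscosity inequalities read
\begin{equation*}
|\alpha|^{p-2}\alpha \leq |a|^{p-2}\operatorname{tr}(L(a) X), \qquad |\beta|^{p-2}\beta \geq |b|^{p-2}\operatorname{tr}(L(b) Y).
\end{equation*}
Since $a = b$ and $p \geq 2$, $L(a) = L(b)$ is symmetric positive definite, so $X \leq Y$ yields $\operatorname{tr}(L(a)(X-Y)) \leq 0$. Subtracting,
\begin{equation*}
|\alpha|^{p-2}\alpha - |\beta|^{p-2}\beta \leq |a|^{p-2}\operatorname{tr}(L(a)(X-Y)) \leq 0.
\end{equation*}
But the map $r \mapsto |r|^{p-2}r$ is strictly monotone and $\alpha - \beta \geq \eta/(T-\hat t)^2 > 0$, so the left-hand side is strictly positive, a contradiction.

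The main obstacle is the degenerate case $a = 0$, i.e.\ $\hat x = \hat y$, where the spatial part of both equations vanishes and $L(a)$ is not defined. For $p \geq 2$ the coefficient $|a|^{p-2} L(a)$ extends continuously to zero at $a = 0$, so in the jet formulation the subsolution inequality at $a=0$ simply becomes $|\alpha|^{p-2}\alpha \leq 0$ and similarly $|\beta|^{p-2}\beta \geq 0$, which together with $\alpha > \beta$ is again contradictory; one has to justify this limiting interpretation carefully (this is exactly the content of the definition of viscosity solutions with the singular operator, and is where the reference \cite{JLM01} does the bookkeeping). Once this case is dispatched, sending $\varepsilon, \sigma \to 0$ and then $\eta \to 0$ contradicts the assumption $M > 0$, proving $v \leq w$.
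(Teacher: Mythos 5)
Your proposal is correct and follows essentially the same route the paper relies on: the paper gives no proof of Proposition \ref{UsualComparison}, deferring to the doubling-of-variables comparison argument of Theorem 4.7 in \cite{JLM01}, which is exactly the strict-subsolution reduction, penalization, Ishii--Jensen jets, and monotonicity of $r\mapsto|r|^{p-2}r$ that you outline. The only remark worth making is that your worry about the degenerate case $\hat x=\hat y$ is milder than you suggest: for $p\ge 2$ the coefficient $|a|^{p-2}L(a)$ extends continuously by zero, so the operator is continuous (not singular) and the jet inequalities $|\alpha|^{p-2}\alpha\le 0\le|\beta|^{p-2}\beta$ follow directly from the definition, with no special treatment needed.
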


\section{Log-Lipschitz regularity }\label{sec:loglip}
We start with a technical calculus result.
\begin{lem}\label{lem:phi1} Let 
$$
\phi(r)=\begin{cases}  -r\ln r, & 0<r<e^{-1}\\e^{-1}, &r\geq e^{-1}.\end{cases}
$$
Then $\phi(r)<1/8$ implies
$$
r<e^{-2},\quad  \phi(r)=-r\ln r,\quad  \phi'(r)\geq 1,\quad |\phi''(r)|\leq \phi'(r)/r.$$
\end{lem}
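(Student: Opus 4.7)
The plan is to verify each of the four implications by direct calculation, using monotonicity of $\phi$ on $(0,e^{-1})$ as the main tool.

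First I would observe that on $(0,e^{-1})$ the function $\phi(r)=-r\ln r$ has derivative $\phi'(r)=-\ln r-1>0$, so $\phi$ is strictly increasing there. Since $\phi(e^{-1})=e^{-1}>1/8$ and $\phi\equiv e^{-1}$ on $[e^{-1},\infty)$, the assumption $\phi(r)<1/8$ immediately forces $r<e^{-1}$, which in turn gives $\phi(r)=-r\ln r$. This handles the first two claims simultaneously.

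For the bound $r<e^{-2}$, I would compute $\phi(e^{-2})=2e^{-2}$ and note $2e^{-2}>1/8$ (since $16>e^2\approx 7.39$). By monotonicity of $\phi$ on $(0,e^{-1})$, the hypothesis $\phi(r)<1/8<\phi(e^{-2})$ gives $r<e^{-2}$. The estimate $\phi'(r)\geq 1$ is then immediate: from $r<e^{-2}$ we get $-\ln r>2$, hence $\phi'(r)=-\ln r-1>1$.

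Finally, for the Hessian-type bound, I would compute $\phi''(r)=-1/r$, so $|\phi''(r)|=1/r$. The inequality $|\phi''(r)|\leq \phi'(r)/r$ is equivalent, after multiplying by $r>0$, to $1\leq -\ln r-1$, i.e.\ $r\leq e^{-2}$, which we have already established. There is no real obstacle here; the only thing to verify is the numerical inequality $2e^{-2}>1/8$, which could equivalently be written as $e^2<16$.
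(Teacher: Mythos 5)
Your proof is correct and follows essentially the same route as the paper: monotonicity of $\phi$, the evaluation $\phi(e^{-2})=2e^{-2}>1/8$ to force $r<e^{-2}$, and direct computation of $\phi'$ and $\phi''$ to get the last two bounds. Nothing to add.
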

\begin{proof}
First we note that $\phi$ is non-decreasing. Moreover, $$\phi(e^{-2})=2e^{-2}>1/8>\phi(r).$$ Therefore, $r<e^{-2}$, which also implies that $\phi(r)=-r\ln r$. In addition, 
$$\phi'(r)=-1-\ln r,
$$
is a non-increasing function and $\phi'(e^{-2})=1$. Therefore, $\phi'(r)\geq 1$.

Finally, 
$$
|\phi''(r)|=r^{-1}\leq \phi'(r)r^{-1}=-r^{-1}-r^{-1}\ln r
$$
since $r<e^{-2}$.
\end{proof}
\begin{prop}\label{prop:loglip} Suppose $u$ is a viscosity solution of \eqref{eq:art} in $Q_2$ such that $\osc_{Q_2} u\leq 1$. Then 
$$
u(t,x)-u(t,y)\leq A\phi (|x-y|)+\frac{B}{2}\left(|x|^2+|y|^2+t^2\right), 
$$
for $(x,t)\in Q_1$. Here 
$$
\phi(r)=\begin{cases} 0<-r\ln r, & r<e^{-1}\\e^{-1}, &r\geq e^{-1}\end{cases}
$$
and $A=A(n,p)$ and $B$ is universal.
\end{prop}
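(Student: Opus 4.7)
I plan to prove the estimate by the Ishii--Lions doubling-of-variables technique. Introduce
\[
\Phi(x,y,t)=u(x,t)-u(y,t)-A\phi(|x-y|)-\tfrac{B}{2}\bigl(|x|^2+|y|^2+t^2\bigr)
\]
on $\overline{B_1}\times\overline{B_1}\times[-1,0]$ for constants $A=A(n,p)\ge 8$ and a universal $B$ to be chosen. If the claim fails, then $M:=\sup\Phi>0$, and taking $B$ universal but large (using $\osc_{Q_2}u\le 1$) forces the supremum to be attained at an interior point $(\hat x,\hat y,\hat t)\in B_1\times B_1\times(-1,0]$ with $\hat r:=|\hat x-\hat y|>0$. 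Positivity $M>0$ gives $A\phi(\hat r)<1$, so $\phi(\hat r)<1/8$, and Lemma~\ref{lem:phi1} supplies $\hat r<e^{-2}$, $\phi'(\hat r)\ge 1$ and $|\phi''(\hat r)|\le\phi'(\hat r)/\hat r=1/\hat r$.

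Next, the parabolic Jensen--Ishii lemma (applied after the standard desingularization $\tfrac{1}{\sigma}(t-s)^2$, $\sigma\to 0$) yields jets $(\tau_1,a_x,X)\in\overline{\mathcal P}^{2,+}u(\hat x,\hat t)$ and $(\tau_2,a_y,Y)\in\overline{\mathcal P}^{2,-}u(\hat y,\hat t)$ with $\tau_1-\tau_2=B\hat t$, gradients $a_x=A\phi'(\hat r)e+B\hat x$ and $a_y=A\phi'(\hat r)e-B\hat y$ for $e=(\hat x-\hat y)/\hat r$, and the block matrix inequality $\bigl(\begin{smallmatrix}X & 0\\ 0&-Y\end{smallmatrix}\bigr)\le D^2\Psi$ for $\Psi=A\phi(|x-y|)+\tfrac{B}{2}(|x|^2+|y|^2)$. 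Testing on $(\eta,\eta)$ yields $X-Y\le 2B\,I$, while testing on $(e,-e)$ yields $\langle(X-Y)e,e\rangle\le 4A\phi''(\hat r)+2B$, the strict concavity of $\phi$ supplying the decisive negative sign. Since $|a_x|,|a_y|\ge A/2$ (for $A\ge 4B$) and $a_x-a_y=B(\hat x+\hat y)$ is of order $B$, a direct expansion in $B/A$ gives
\[
|a_x|^{p-2}\tr\bigl(L(a_x)X\bigr)-|a_y|^{p-2}\tr\bigl(L(a_y)Y\bigr)\le -c(n,p)\,\frac{A^{p-1}}{\hat r}+C(n,p)\,A^{p-2}B,
\]
which is very negative once $\hat r$ is small.

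Subtracting the two viscosity inequalities then bounds $|\tau_1|^{p-2}\tau_1-|\tau_2|^{p-2}\tau_2$ from above by the right-hand side of the preceding display. The main obstacle is the time term: the monotonicity of $\tau\mapsto|\tau|^{p-2}\tau$ and $\tau_1-\tau_2=B\hat t\le 0$ only force this difference to be non-positive, and \emph{a priori} $\tau_1,\tau_2$ may be enormous. I would control them by extracting $\|X\|,\|Y\|\le CA\phi'(\hat r)/\hat r$ from the block inequality and plugging back into the two viscosity inequalities to obtain $|\tau_i|\le C(n,p)\,A\phi'(\hat r)/\hat r^{1/(p-1)}$; the mean value theorem for $\tau\mapsto|\tau|^{p-2}\tau$ then yields
\[
\bigl||\tau_1|^{p-2}\tau_1-|\tau_2|^{p-2}\tau_2\bigr|\le C_p\,A^{p-2}\phi'(\hat r)^{p-2}B\,\hat r^{-(p-2)/(p-1)}.
\]
Since $\phi'(\hat r)^{p-2}\hat r^{1/(p-1)}=(-1-\ln\hat r)^{p-2}\hat r^{1/(p-1)}\to 0$ as $\hat r\to 0$, and the constraint $A\phi(\hat r)\le 1$ forces $\hat r\to 0$ as $A\to\infty$, this time contribution is of strictly lower order than the negative elliptic leading term $-cA^{p-1}/\hat r$. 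Hence, for $A=A(n,p)$ sufficiently large (and $B$ universal), the viscosity inequalities are violated, contradicting $M>0$ and yielding $\Phi\le 0$.
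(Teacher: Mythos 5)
Your proposal is correct and follows essentially the same Ishii--Lions strategy as the paper: the same penalized doubling of variables, the same use of the calculus lemma for $\phi$, the same decisive negative eigenvalue of $X-Y$ in the direction $e$, and the same control of the time terms by bounding $|\tau_i|\le CA\phi'(\hat r)\hat r^{-1/(p-1)}$ from the equation together with the jet relation $\tau_1-\tau_2=B\hat t$, and then observing that their contribution is smaller than the elliptic term $-cA^{p-1}/\hat r$ by a factor $\hat r^{1/(p-1)}$ (up to logarithms). The only cosmetic deviations are that the paper keeps the $\frac{1}{\tau}Z^2$ correction in the matrix inequality (absorbed by choosing $\tau=4A\phi'(\delta)/\delta$) and that your bound on the gradient-difference term should carry an extra factor $1/\hat r$; neither affects the conclusion.
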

\begin{proof}
Let 
$$
\Phi(x,y,t)=u(x,t)-u(y,t)-A\phi(|x-y|)-\frac{B}{2}\left(x^2+y^2+t^2\right).
$$
In order to show the desired inequality, we assume towards a contradiction that $\Phi$ assumes a positive maximum at some $t\in [-1,0]$ and $x,y\in  \overline B_1$. Since $\Phi(x,y,t)>0$ we have
\begin{equation}
\label{eq:ABineq}
A\phi(|x-y|)+\frac{B}{2}\left(|x|^2+|y|^2+t^2\right)\leq |u(t,x)-u(t,y)|\leq 1.
\end{equation}
Therefore, we may choose $B=4$ ($B>2$ is enough), so that $x,y\in B_1$ and $t\in (-1,0]$. Let us introduce the notation 
$$
\hat \delta=\frac{x-y}{|x-y|},\quad  \delta =|x-y|.
$$	
By choosing $A>8$ we see that \eqref{eq:ABineq} combined with Lemma \ref{lem:phi1} implies
\begin{equation}\label{eq:delta1}
\delta<e^{-2},\quad  \phi(\delta)=-\delta\ln \delta,\quad  \phi'(\delta)\geq 1,\quad |\phi''(\delta)|\leq \phi'(\delta)/\delta.
\end{equation}
It also follows that
$$
u(x,t)-u(y,t)>0,
$$
implying that $\delta\neq  0$. \\

\noindent {\bf Step 1: Applying the Theorem of sums.}
From the parabolic theorem of sums (Theorem 8.3 in \cite{CIL92} and Theorem 9 in \cite{DFO11}), for any $\tau>0$ there are $X, Y\in S(n)$, $\alpha_1$ and $\alpha_2$ such that\footnote{$S(n)$ stands for symmetric $n\times n$ matrices}
\begin{align*}
(\alpha_1,A\phi'(\delta)\hat\delta,X)\in \overline{\mathcal{P}}_{Q_1}^{2,+}\left(u-\frac{B}{2}|\cdot |^2\right)\,(x,t),\\
 (-\alpha_2,A\phi'(\delta)\hat\delta,Y)\in \overline{\mathcal{P}}_{Q_1}^{2,-}\left(u+\frac{B}{2}|\cdot|^2\right)\,(y,t),
\end{align*}
$$
\alpha_1+\alpha_2 \geq Bt,
$$
\begin{equation}
\label{eq1:sum1}
-[\tau +||Z||]\begin{bmatrix}
I& 0\\
0 & I\end{bmatrix} \leq 
\begin{bmatrix}
X& 0\\
0 & -Y\end{bmatrix} 
\end{equation}
and
\begin{equation}
\begin{bmatrix}
X& 0\\
0 & -Y\end{bmatrix} \leq \begin{bmatrix}
Z & -Z\\
-Z & Z
\end{bmatrix}+
\frac{1}{\tau}\begin{bmatrix}
Z^2 & -Z^2\\
-Z^2 & Z^2
\end{bmatrix}.
\label{eq1:sum2}
\end{equation}
Here 
$$
Z=A\phi''(\delta)\hat \delta\otimes \hat \delta+A\frac{\phi'(\delta)}{\delta}\left(I-\hat \delta\otimes \hat \delta\right)
$$
and thus
$$
Z^2=A^2\left(\phi''(\delta)\right)^2\hat \delta\otimes \hat \delta+A^2\left(\frac{\phi'(\delta)}{\delta}\right)^2\left(I-\hat \delta\otimes \hat \delta\right).
$$
This implies in particular
\begin{equation}\label{eq:utest1}
(\alpha_1,a,X+BI)\in \overline{\mathcal{P}}_{Q_1}^{2,+}u\,(x,t),\quad (-\alpha_2,b,Y-BI)\in \overline{\mathcal{P}}_{Q_1}^{2,-}u\,(y,t),
\end{equation}
where
$$
a=A\phi'(\delta)\hat\delta+Bx, \quad b= A\phi'(\delta)\hat\delta-By.
$$
We now choose
$$
\tau = 4A\frac{\phi'(\delta)}{\delta} 
$$
\noindent {\bf Step 2: Basic estimates.}
Since 
$$
Z=A\left(\phi''(\delta) -\frac{\phi'(\delta)}{\delta}\right)\hat \delta\otimes \hat \delta+A\frac{\phi'(\delta)}{\delta}I,
$$
we see that 
\begin{equation}
\|Z\|\leq A\frac{\phi'(\delta)}{\delta}, \quad \|Z^2\|\leq A^2\left(|\phi''(\delta)|+\frac{\phi'(\delta)}{\delta}	\right)^2\leq 4A^2\left(\frac{\phi'(\delta)}{\delta}\right)^2,
\label{eq1:Zest}
\end{equation}
where we also used \eqref{eq:delta1}.

It will be convenient to introduce the notation 
$$
q=A\phi'(\delta)\hat\delta.
$$
Note that since $A>8$ and $B=4$
$$
|a|\leq |q|+4=2|q|+4-|q|\leq 2|q|
$$
where we used that by \eqref{eq:delta1} we have
$$
|q|=A|\phi'(\delta)|>8.
$$
Similarly
$$
|a|\geq |q|-4=|q|/2-4+A|\phi'(\delta)|/2\geq |q|/2.
$$
The same arguments can be carried out for $b$. Hence, 
\begin{equation}
\label{eq1:qest}
|q|/2\leq |a|\leq 2|q|, \quad |q|/2\leq |b|\leq 2|q|.
\end{equation}
By testing \eqref{eq1:sum1} and \eqref{eq1:sum2} with vectors of the form $(\xi,\xi)$ and $(\xi,0)$, where $\|\xi\| =1$ we obtain that 
\begin{equation}
\label{eq1:xynorm}
\|X\|,\|Y\|\leq \max(\|Z\|+\tau,\|Z\|+\frac{1}{\tau}\|Z^2\|)\leq 5A\frac{\phi'(\delta)}{\delta},\quad X-Y\leq 0,
\end{equation}
where we used \eqref{eq1:Zest} and that $|\phi''(\delta)|\leq \phi'(\delta)/\delta$.\\

\noindent {\bf Step 3: Using the equation.}
From the equation together with \eqref{eq:utest1} we obtain the two following inequalities
\begin{align}\label{eq:alphaineq}
|\alpha_1|^{p-2}\alpha_1&\leq |a|^{p-2}\tr(L(a)(X+BI)),\\\nonumber 
-|\alpha_2|^{p-2}\alpha_2&\geq |b|^{p-2}\tr(L(b)(Y-BI)),
\end{align}
where
$$
L(v)=I+(p-2)\frac{v\otimes v}{|v|^2}.
$$
Subtracting these inequalities, we obtain 
\begin{equation}
\label{eq1:viscdiffold}
|\alpha_1|^{p-2}\alpha_1+|\alpha_2|^{p-2}\alpha_2\leq |a|^{p-2}\tr(L(a)(X+BI))- |b|^{p-2}\tr(L(b)(Y-BI)).
\end{equation}
The aim is now to estimate the left hand side from below and the right hand side from above, and obtain a contradiction when choosing $A$ large enough. The idea is that there is at least one eigenvalue of $X-Y$ which is very negative when $A$ is large enough. This will violate an inequality obtained from the equation.\\

\noindent {\bf Step 4: Lower bound for the left hand side.}
 First of all, by \eqref{eq1:qest}, \eqref{eq1:xynorm} and \eqref{eq:alphaineq}
$$
|\alpha_i|^{p-2}\alpha_i\leq C|q|^{p-2}\left(A\frac{\phi'(\delta)}{\delta}+1\right)\leq C|q|^{p-2}A\frac{\phi'(\delta)}{\delta},\quad i=1,2, \quad C=C(n),
$$
where we used that $|q|=A\phi'(\delta)\geq 8$ and $\phi'(\delta)/\delta\geq e^2$ by \eqref{eq:delta1}, so that the constant can be absorbed. 
From the above together with relation 
$$\alpha_1+\alpha_2\geq Bt,
$$ 
it follows that\footnote{Recall the inequality
$$
(\beta_1+\beta_2)^{p-1}\leq 2^{p-2}\left(\beta_1^{p-1}+\beta_2^{p-1}\right),\quad \beta_1,\beta_2\geq 0,\quad p\geq 2.
$$}
$$
|\alpha_1|^{p-2}\alpha_1\geq -C-C\left(\alpha_2^+\right)^{p-1}\geq -C|q|^{p-2}\left(A\frac{\phi'(\delta)}{\delta}+1\right)\geq -C|q|^{p-2}A\frac{\phi'(\delta)}{\delta},
$$
where $C=C(p,n)$ and $\alpha_2^+$ is the positive part of $\alpha_2$. 
The same estimate holds also for $\alpha_2$. Thus
$$
|\alpha_i|^{p-2}\leq C|q|^{\frac{(p-2)^2}{p-1}}A^\frac{p-2}{p-1}\left(\frac{\phi'(\delta)}{\delta}\right)^\frac{p-2}{p-1},\quad i=1,2,\quad  C=C(p,n).
$$
This implies, via the inequality
$$
\Big||\beta_1+\beta_2|^{p-2}(\beta_1+\beta_2)-|\beta_1|^{p-2}\beta_1\Big|\leq (p-1)|\beta_2|(|\beta_1|+|\beta_2|)^{p-2}, 
$$
that
\begin{align}
|\alpha_1|^{p-2}\alpha_1+|\alpha_2|^{p-2}\alpha_2&\geq |\alpha_1|^{p-2}\alpha_1-|\alpha_1-Bt|^{p-2}(\alpha_1-Bt)\nonumber \\
&\geq -C(|\alpha_1|+1)^{p-2}\label{eq:alphaest}\\
&\geq -C_0|q|^{\frac{(p-2)^2}{p-1}}A^\frac{p-2}{p-1}\left(\frac{\phi'(\delta)}{\delta}\right)^\frac{p-2}{p-1}, \nonumber
\end{align}
where $C_0=C_0(p,n)$ and where we again absorbed the constant due to the bounds from below on $|q|$ and $\phi'(\delta)/\delta$. From \eqref{eq1:viscdiffold} and \eqref{eq:alphaest}, we can thus conclude
\begin{equation}
\label{eq1:viscdiff}
 |a|^{p-2}\tr(L(a)(X+BI))- |b|^{p-2}\tr(L(b)(Y-BI))\geq -C_0|q|^{\frac{(p-2)^2}{p-1}}A^\frac{p-2}{p-1}\left(\frac{\phi'(\delta)}{\delta}\right)^\frac{p-2}{p-1},
\end{equation}
where  $C_0=C_0(p,n)$.\\

\noindent {\bf Step 5: Upper bound for the right hand side.}
We now turn our attention to the right hand side. We split these terms into three parts
\begin{align*}
&|a|^{p-2}\tr(L(a)(X+BI))- |b|^{p-2}\tr(L(b)(Y-BI))\\
&=|b|^{p-2}\tr(L(b)(X-Y))+\tr((|a|^{p-2}L(a)-|b|^{p-2}L(b))X)\\
&+\left(|a|^{p-2}\tr(L(a) BI)+|b|^{p-2}\tr (L(b)BI)\right)\\
&=T_1+T_2+T_3.
\end{align*}

\noindent {\bf Step 5a: $T_1$.}

Testing inequality \eqref{eq1:sum2} with $(\hat\delta,-\hat \delta)$ we see that by \eqref{eq:delta1} and the choice of $\tau$
$$
\hat\delta\cdot (X-Y)\hat\delta\leq 4A\phi''(\delta)+\frac{4}{\tau}A^2\left(\phi''(\delta)\right)^2\leq 2A\phi''(\delta), 
$$
so that at least one of the eigenvalues of $X-Y$ is smaller than $2A\phi''(\delta)$. From \eqref{eq1:xynorm}, we know that the rest are non-positive. Hence, 
\begin{equation}
\label{eq1:T1est}
T_1\leq 2|b|^{p-2}A\phi''(\delta)\leq C_1A|q|^{p-2}\phi''(\delta)=-C_1A|q|^{p-2}\delta^{-1},\quad C_1=C_1(p),
\end{equation}
where we used \eqref{eq1:qest} and that the smallest eigenvalue of $L$ is $1$.\\

\noindent {\bf Step 5b: $T_2$.} For $T_2$ we have
\begin{equation}\label{eq1:T2est}
\begin{split}
T_2\leq n\|X\|\||a|^{p-2}L(a)-|b|^{p-2}L(b)\|&\leq  C\|X\||q|^{p-3}|B(x+y)|\\
&\leq C_2A\frac{\phi'(\delta)}{\delta}|q|^{p-3}\\
&=C_2|q|^{p-2}\delta^{-1},
\end{split}
\end{equation}
where $C_2=C_2(p,n)$,  and where we used the mean value theorem (for the mapping $v\mapsto |v|^{p-2}L(v)$), the definition of $q$, \eqref{eq1:qest}, \eqref{eq1:xynorm}, that $x,y\in B_1$ and that $B=4$. We also note that since 
$$
|a+s(b-a)|=|A\phi'(\delta)\hat\delta+Bx- sB(x+y)|\ge A\phi'(\delta)-B|x|-sB|x+y|\ge 8-3B\ge 2,
$$
for $s\in [0,1]$, the line between $a$ and $b$ does not pass through the origin.
\\

\noindent {\bf Step 5c: $T_3$.} For $T_3$ we have
\begin{equation}\label{eq1:T3est}
T_3\leq CB(p-1)n|q|^{p-2}\leq C_3|q|^{p-2},\quad C_3=C_3(p,n),
\end{equation}
where we have used \eqref{eq1:qest}. \\

\noindent {\bf Step 6: The contradiction.} Using \eqref{eq1:T1est}--\eqref{eq1:T3est} together with \eqref{eq1:viscdiff}, we obtain 
\begin{align*}
-C_0|q|^{\frac{(p-2)^2}{p-1}}A^\frac{p-2}{p-1}\left(\frac{\phi'(\delta)}{\delta}\right)^\frac{p-2}{p-1}\leq &-C_1A|q|^{p-2}\delta^{-1}+C_2|q|^{p-2}\delta^{-1}+C_3|q|^{p-2}\\
&=|q|^{p-2}\left(\frac{C_2-C_1A}{\delta}+C_3\right), 
\end{align*}
or equivalently
$$
C_0|q|^{\frac{(p-2)^2}{p-1}}A^\frac{p-2}{p-1}\left(\frac{\phi'(\delta)}{\delta}\right)^\frac{p-2}{p-1}+|q|^{p-2}\left(\frac{C_2-C_1A}{\delta}+C_3\right)\geq 0.
$$
This will be a contradiction if $A$ is chosen so that 
$$
|q|^{p-2}\left(\frac{C_2-C_1A/2}{\delta}+C_3\right)<0,\quad C_0|q|^{\frac{(p-2)^2}{p-1}}A^\frac{p-2}{p-1}\left(\frac{\phi'(\delta)}{\delta}\right)^\frac{p-2}{p-1}-\frac{C_1A|q|^{p-2}}{2\delta}<0.
$$
The first inequality is satisfied if we choose $A>2(C_3+C_2)/C_1$, which is a constant depending only on $n$ and $p$. Using that $|q|=A|\phi'(\delta)|$, the second inequality can be simplified to
$$
A>\frac{2C_0}{C_1}\delta^\frac{1}{p-1}, 
$$
so that it is sufficient to choose $A>2C_0/C_1$ which is a constant depending only on $n$ and $p$.
Hence, we arrive at a contradiction if 
$$
A>\max\left(2(C_3+C_2)/C_1, 2C_0/C_1\right).
$$
\end{proof}

\begin{cor}\label{cor:loglip} Suppose $u$ is a viscosity solution of \eqref{eq:art} in $Q_2$ such that $\osc_{Q_2} u\leq 1$. Then 
$$
|u(x,t)-u(y,t)|\leq C|x-y||\ln|x-y||
$$
for $t\in [-1,0]$ and $x,y\in B_\frac14$. Here $C=C(n,p)$.
\end{cor}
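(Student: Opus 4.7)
The plan is to deduce the corollary from Proposition \ref{prop:loglip} by a translation and parabolic rescaling of $u$ designed to kill the polynomial error $\frac{B}{2}(|x|^2+|y|^2+t^2)$ in that proposition (without rescaling, this error is an $O(1)$ constant on $B_{1/4}\times[-1,0]$ and would ruin the bound as $|x-y|\to 0$). Given $x,y\in B_{1/4}$ and $t\in[-1,0]$, I would set $m=(x+y)/2$ and $R=1/4$, and consider
$$
v(\xi,\tau):=u\bigl(m+R\xi,\,t+R^{p/(p-1)}\tau\bigr).
$$
By the translation- and scale-invariance of \eqref{eq:art} under $(x,t)\mapsto(Rx,R^{p/(p-1)}t)$, $v$ is a viscosity solution of \eqref{eq:art} wherever defined. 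I would verify that the preimage cylinder $Q_{1/2}(m,t)=B_{1/2}(m)\times(t-(1/2)^{p/(p-1)},t]$ sits inside $Q_2(0,0)$: the spatial containment follows from $|m|\leq 1/4$, and the temporal containment reduces to $2^{p/(p-1)}-(1/2)^{p/(p-1)}\geq 1$, which holds for every $p\geq 2$ since the left-hand side is monotone in $p/(p-1)\in(1,2]$ with minimum $3/2$. This shows $v$ is defined on $Q_2(0,0)$ with $\osc_{Q_2}v\leq\osc_{Q_2}u\leq 1$, so Proposition \ref{prop:loglip} applies to $v$.

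Next I would take $\xi_1=2(x-y)$ and $\xi_2=-2(x-y)$ (both in $B_1$ since $|x-y|<1/2$) and $\tau=0$, noting that $v(\xi_1,0)=u(x,t)$ and $v(\xi_2,0)=u(y,t)$. The proposition then yields
$$
u(x,t)-u(y,t)\leq A\phi(4|x-y|)+4B|x-y|^2.
$$
The final step is a case split on the size of $|x-y|$. If $|x-y|<1/(4e)$, then Lemma \ref{lem:phi1} gives $\phi(4|x-y|)=-4|x-y|\ln(4|x-y|)$, and I would use $-\ln(4|x-y|)=-\ln 4+|\ln|x-y||\leq|\ln|x-y||$ together with $|x-y|\leq|\ln|x-y||$ (valid for $|x-y|\leq 1/e$) to conclude $u(x,t)-u(y,t)\leq 4(A+B)|x-y||\ln|x-y||$. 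If instead $1/(4e)\leq|x-y|<1/2$, the trivial bound $|u(x,t)-u(y,t)|\leq\osc_{Q_2}u\leq 1$ combined with the positive lower bound of $r\mapsto r|\ln r|$ on this interval already supplies a bound of the required form $C|x-y||\ln|x-y||$. Interchanging $x$ and $y$ then yields the absolute-value estimate. The only mildly technical point in this plan is verifying uniformly in $p\geq 2$ that the rescaled cylinder fits inside $Q_2$; everything else is elementary.
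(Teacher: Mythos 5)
Your proof is correct and follows essentially the same route as the paper: both arguments recenter at the midpoint $(x+y)/2$, exploit the translation/scaling invariance of \eqref{eq:art} to make the quadratic penalty term of Proposition \ref{prop:loglip} negligible, and then absorb the remaining $O(|x-y|^2)$ error into the log-Lipschitz modulus (the paper packages the symmetric placement as a triangle inequality through the midpoint, while you evaluate the two-point estimate directly at $\pm 2(x-y)$, a cosmetic difference). All the supporting checks (containment of the rescaled cylinder, the case split on $|x-y|$) are sound.
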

\begin{proof} First of all, by choosing $t=0$ and $x=0$ or $y=0$ in Proposition \ref{prop:loglip}, we obtain 
\begin{equation}\label{eq:logest1}
|u(x,0)-u(0,0)|\leq C|x-y||\ln |x-y||,\quad x\in B_1,\quad C=C(n,p).
\end{equation}
We now show how to obtain the desired regularity in the whole cylinder $B_{1/4}\times (-1,0)$. Let $(z_0,t_0)\in B_1\times (-1,0)$ and define
$$
v(x,t):=u\left(\frac{x}{2}+z_0,\frac{t}{2^\frac{p}{p-1}}+t_0 \right).
$$
Then $v$ is a solution of \eqref{eq:art} in $Q_2$. By construction, we also have
\[
\osc_{Q_2} v\leq \osc_{Q_2} u\leq  1.
\] 
We may therefore apply \eqref{eq:logest1} to $v$ and obtain
$$
\sup_{x\in B_r}|v(x,0)-v(0)|\leq C\,r|\ln r|,\quad 0<r<1.
$$
In terms of $u$ this implies
\begin{equation}
\label{eq:supest}
\sup_{x\in B_r(z_0)}|u(x,t_0)-u(z_0,t_0)|\leq Cr|\ln r|,\qquad 0<r<\frac{1}{2},
\end{equation}
upon renaming the constant. We note that this holds for any $z_0\in B_1,t_0\in (-1,0)$. Now take any pair $x,y\in B_{1/4}$ and set $|x-y|= r$. We observe that $r<1/2$ and we set $z=(x+y)/2$. Then we apply \eqref{eq:supest} with $z_0=z$ and obtain
\[
\begin{split}
|u(x,t_0)-u(y,t_0)|&\leq |u(x,t_0)-u(z,t_0)|+|u(y,t_0)-u(z,t_0)|\\
&\leq 2\sup_{w\in B_r(z)}|u(w,t_0)-u(z,t_0)|\\
&\leq 2\,C\,r\ln r=2\,C|x-y||\ln |x-y||.
\end{split}
\]
which is the desired result.
\end{proof}

\section{Lipschitz continuity}\label{sec:lip} We first prove some properties of the function $\varphi$ used in this section.
\begin{lem}\label{lem:phi2} Let 
$$
\varphi(r)=\begin{cases} r-r^\gamma, & 0<r\leq r_0= \left(\frac{1}{\gamma}\right)^\frac{1}{\gamma-1}\\ r_0-r_0^\gamma, &\text{otherwise}\end{cases},\quad \gamma\in \left(0,\min\left\{\frac{3}{2},\frac{p}{p-1}\right\}\right).
$$
Then 
$$
\varphi(r)<\left(\frac{1}{2\gamma}\right)^\frac{1}{\gamma-1}\left(1-\frac{1}{2\gamma}\right)
$$ implies
$$
r<\left(\frac{1}{2\gamma}\right)^\frac{1}{\gamma-1}<1,\quad  \varphi(r)=r-r^\gamma, \quad  \varphi'(r)\geq 1/2,\quad |\varphi''(r)|\leq \varphi'(r)/r.
$$
\end{lem}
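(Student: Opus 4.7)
This is a calculus lemma entirely parallel in spirit to Lemma~\ref{lem:phi1}, and the proof hinges on identifying the right threshold. The plan is to introduce the level
$$
r_\ast=\left(\tfrac{1}{2\gamma}\right)^{1/(\gamma-1)},
$$
which is defined so that $\varphi'(r_\ast)=1/2$. Indeed, on the branch $\varphi(r)=r-r^\gamma$ we have $\varphi'(r)=1-\gamma r^{\gamma-1}$, and setting $\gamma r^{\gamma-1}=1/2$ yields precisely $r=r_\ast$. Note $r_\ast<r_0=(1/\gamma)^{1/(\gamma-1)}$ since $1/(2\gamma)<1/\gamma$ is raised to the positive exponent $1/(\gamma-1)$ (using $\gamma>1$, which makes the formulas meaningful); the same observation with $1/(2\gamma)<1$ gives $r_\ast<1$. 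A direct computation then shows
$$
\varphi(r_\ast)=r_\ast-r_\ast\cdot r_\ast^{\gamma-1}=r_\ast\left(1-\tfrac{1}{2\gamma}\right)=\left(\tfrac{1}{2\gamma}\right)^{1/(\gamma-1)}\!\left(1-\tfrac{1}{2\gamma}\right),
$$
so the hypothesis $\varphi(r)<(1/(2\gamma))^{1/(\gamma-1)}(1-1/(2\gamma))$ is really the statement $\varphi(r)<\varphi(r_\ast)$.

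Next, I would exploit monotonicity. Since $\varphi'(r)=1-\gamma r^{\gamma-1}>0$ on $(0,r_0)$, the function $\varphi$ is strictly increasing there; combined with $r_\ast<r_0$ and the hypothesis $\varphi(r)<\varphi(r_\ast)$, this forces $r<r_\ast$. This already yields the first two conclusions: $r<r_\ast<1$ and, because $r<r_0$, we are on the branch $\varphi(r)=r-r^\gamma$. Moreover $\varphi'$ is itself strictly decreasing on $(0,r_0)$ (its derivative is negative), so $r<r_\ast$ gives $\varphi'(r)>\varphi'(r_\ast)=1/2$, which is the third conclusion.

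For the Hessian-type bound, I would simply compute $\varphi''(r)=-\gamma(\gamma-1)r^{\gamma-2}$ and reduce $|\varphi''(r)|\leq \varphi'(r)/r$ (after multiplying by $r>0$) to
$$
\gamma(\gamma-1)r^{\gamma-1}\leq 1-\gamma r^{\gamma-1},\qquad\text{i.e.,}\qquad \gamma^2 r^{\gamma-1}\leq 1.
$$
Since $r<r_\ast$ gives $r^{\gamma-1}<1/(2\gamma)$, the left-hand side is strictly less than $\gamma/2$, and the bound $\gamma<3/2$ closes the estimate with room to spare. No step here is genuinely difficult; the only conceptual move is recognizing that the constant appearing in the hypothesis is engineered to equal $\varphi(r_\ast)$, and the role of the constraint $\gamma<3/2$ is isolated to the final Hessian inequality, where it serves precisely to enforce $\gamma/2<1$.
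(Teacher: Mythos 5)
Your proposal is correct and follows essentially the same route as the paper: identify the threshold $r_\ast=(1/(2\gamma))^{1/(\gamma-1)}$, compute $\varphi(r_\ast)$, use monotonicity of $\varphi$ and $\varphi'$ to get the first three conclusions, and reduce the second-derivative bound to $\gamma^2 r^{\gamma-1}\leq 1$. The only cosmetic difference is in closing that last inequality (you use $\gamma<3/2$ to get $\gamma/2<1$, while the paper notes $\gamma<2$ already gives $1/(2\gamma)\leq\gamma^{-2}$), which is immaterial.
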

\begin{proof}
First we note that $\varphi$ is non-decreasing. Moreover, 
$$
\varphi\left(\left(\frac{1}{2\gamma}\right)^\frac{1}{\gamma-1}\right)=\left(\frac{1}{2\gamma}\right)^\frac{1}{\gamma-1}\left(1-\frac{1}{2\gamma}\right).
$$
Therefore, $r<\left(\frac{1}{2\gamma}\right)^\frac{1}{\gamma-1}$ and by definition, $\varphi(r)=r-r^\gamma$. It is also straight forward to verify that
$$
\varphi'(r)=1-\gamma r^{\gamma-1}\geq \frac12
$$
whenever $r\leq \left(\frac{1}{2\gamma}\right)^\frac{1}{\gamma-1}$. Finally, 
$$
|\varphi''(r)|=\gamma(\gamma-1)r^{\gamma-2}\leq \varphi'(r)r^{-1}=r^{-1}-\gamma r^{\gamma-2}
$$
since $r^{\gamma-1}<1/(2\gamma)$ together with $\gamma<2$ implies $r^{\gamma-1}\leq \gamma^{-2}$.
\end{proof}
\begin{prop} Suppose $u$ is a viscosity solution of \eqref{eq:art} in $Q_2$ such that $\osc_{Q_2} u\leq 1$. Then 
$$
u(x,t)-u(y,t)\leq A\varphi (|x-y|)+\frac{B}{2}\left(|x|^2+|y|^2+t^2\right), 
$$
for $(x,t)\in {Q_1}$. Here
$$
\varphi(r)=\begin{cases} r-r^\gamma, & 0<r\leq r_0= \left(\frac{1}{\gamma}\right)^\frac{1}{\gamma-1}\\ r_0-r_0^\gamma, &\text{otherwise}\end{cases},\quad \gamma\in (1,\frac{p}{p-1})\cap (1,3/2)
$$
and $A=A(n,p)$ and $B$ is universal.
\end{prop}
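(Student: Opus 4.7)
The proof will mirror that of Proposition~\ref{prop:loglip}, substituting the near-linear penalization $\varphi$ for the log-Lipschitz penalization $\phi$ and exploiting the corresponding properties of $\varphi$ established in Lemma~\ref{lem:phi2}.

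To set up the contradiction, I assume that
$$
\Phi(x,y,t):=u(x,t)-u(y,t)-A\varphi(|x-y|)-\frac{B}{2}\bigl(|x|^2+|y|^2+t^2\bigr)
$$
attains a positive maximum on $\overline{B_1}\times\overline{B_1}\times[-1,0]$. Setting $B=4$ and using $\osc_{Q_2}u\leq 1$ places the maximum in the interior; choosing $A=A(n,p,\gamma)$ large enough so that $A\varphi(\delta)\leq 1$ triggers the hypothesis of Lemma~\ref{lem:phi2}, yielding $\delta:=|x-y|$ below the cap, $\varphi'(\delta)\in[1/2,1]$, and $|\varphi''(\delta)|\leq\varphi'(\delta)/\delta$.

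Next I apply the parabolic theorem of sums with $\tau=4A\varphi'(\delta)/\delta$, obtaining jets $(\alpha_1,a,X+BI)\in\overline{\mathcal{P}}^{2,+}u(x,t)$ and $(-\alpha_2,b,Y-BI)\in\overline{\mathcal{P}}^{2,-}u(y,t)$ with $a=A\varphi'(\delta)\hat\delta+Bx$, $b=A\varphi'(\delta)\hat\delta-By$, and $\alpha_1+\alpha_2\geq Bt$. The matrix inequalities from Steps~1--2 of Proposition~\ref{prop:loglip} transfer word-for-word: $\|X\|,\|Y\|\leq 5A\varphi'(\delta)/\delta$, $X-Y\leq 0$ with $\hat\delta^T(X-Y)\hat\delta\leq 2A\varphi''(\delta)$, and $|q|/2\leq|a|,|b|\leq 2|q|$ for $q=A\varphi'(\delta)\hat\delta$. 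Plugging into the viscosity (in)equalities, subtracting, and bounding the time-derivative sum $|\alpha_1|^{p-2}\alpha_1+|\alpha_2|^{p-2}\alpha_2$ from below as in Step~4, I split the spatial side into the three pieces $T_1+T_2+T_3$ from Step~5. The essential new feature is
$$
T_1\leq 2|b|^{p-2}A\varphi''(\delta)\leq -C_1\gamma(\gamma-1)A^{p-1}(\varphi'(\delta))^{p-2}\delta^{\gamma-2},
$$
while $T_2\leq C_2|q|^{p-2}\delta^{-1}$ and $T_3\leq C_3|q|^{p-2}$ retain their forms.

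Dividing through by $A^{p-2}(\varphi'(\delta))^{p-2}$, the inequality reduces to
$$
C_1\gamma(\gamma-1)A\delta^{\gamma-2}\leq C_0\delta^{-(p-2)/(p-1)}+C_2\delta^{-1}+C_3.
$$
The constraint $\gamma<p/(p-1)$ is precisely $\gamma-2<-(p-2)/(p-1)$, so for $\delta<1$ the $C_0$-term is absorbed by the left-hand side, while $C_3$ is clearly beaten by the left-hand side for large $A$. The main obstacle is the $C_2\delta^{-1}$ term, which competes with $A\delta^{\gamma-2}$ when $\delta$ is small; to dispose of it I expect to combine the inequality above with the a-priori log-Lipschitz bound from Corollary~\ref{cor:loglip} (together with $A\varphi(\delta)\leq 1$) in order to restrict $\delta$ to a range where $A\delta^{\gamma-1}$ is large enough. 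Choosing $A=A(n,p,\gamma)$ sufficiently large then delivers the desired contradiction and completes the proof.
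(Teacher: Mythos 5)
There is a genuine gap, and it sits exactly where you flagged it: the $T_2$ term. Your reduced inequality requires $C_1\gamma(\gamma-1)A\,\delta^{\gamma-2}\geq C_2\delta^{-1}$, i.e.\ $A\gtrsim \delta^{1-\gamma}$, which blows up as $\delta\to 0$ because $\gamma>1$; no fixed $A=A(n,p,\gamma)$ can achieve this. Your proposed rescue --- using Corollary \ref{cor:loglip} and $A\varphi(\delta)\leq 1$ to ``restrict $\delta$ to a range where $A\delta^{\gamma-1}$ is large enough'' --- cannot work, because both of those facts produce \emph{upper} bounds on $\delta$ (indeed $A\varphi(\delta)\leq u(x,t)-u(y,t)\leq C\delta|\ln\delta|$ forces $\delta\leq e^{-cA}$), whereas you would need a \emph{lower} bound on $\delta$ at the maximum point, and none is available: the contact points may be arbitrarily close together.

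The paper closes this gap differently, and this is the one genuinely new idea of the Lipschitz step compared to Proposition \ref{prop:loglip}. From $\Phi>0$ one has $\frac{B}{2}(|x|^2+|y|^2+t^2)\leq u(x,t)-u(y,t)$, and the log-Lipschitz estimate (hence $C^{2\gamma-2}$ regularity, since $2\gamma-2<1$) bounds the right-hand side by $C\delta^{2\gamma-2}$; this yields $|x|,|y|,|t|\lesssim\delta^{\gamma-1}$, which is \eqref{eq:xyholder}. Since $a-b=B(x+y)$, the mean value theorem bound $\||a|^{p-2}L(a)-|b|^{p-2}L(b)\|\lesssim |q|^{p-3}|B(x+y)|$ then picks up the extra factor $\delta^{\gamma-1}$, upgrading your $T_2\lesssim|q|^{p-2}\delta^{-1}$ to $T_2\lesssim|q|^{p-2}\delta^{\gamma-2}$ --- the same order as $T_1$ --- after which taking $A$ large finishes the argument exactly as you describe for the $C_0$ and $C_3$ terms. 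Two consequential side effects you should also fix: the paper takes $B=33$ (not $B=4$) so that the maximum points satisfy $x,y\in B_{1/4}$, where Corollary \ref{cor:loglip} actually applies; and the threshold for $A$ that activates Lemma \ref{lem:phi2} is $A\varphi(\delta)\leq 1$ with the specific bound $\varphi(\delta)<\left(\frac{1}{2\gamma}\right)^{\frac{1}{\gamma-1}}\left(1-\frac{1}{2\gamma}\right)$, not merely ``below the cap.''
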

\begin{proof} The proof is almost identical with the proof of Proposition \ref{prop:loglip}. The main differences are the different modulus of continuity and that we use the log-Lipschitz regularity in our estimates. We spell out the details. Let 
$$
\Phi(x,y,t)=u(x,t)-u(y,t)-A\varphi(|x-y|)-\frac{B}{2}\left(x^2+y^2+t^2\right).
$$
We will show that $\Phi(x,y,t)\leq 0$ for $t\in [-1,0]$ and $x,y\in B_1$. In order to do that we assume towards a contradiction that $\Phi$ has a positive maximum for $t\in [-1,0]$ and $x,y\in  \overline B_1$ at $(x,y,t)$. Since $\Phi(x,y,t)>0$ we have
\begin{equation}\label{eq:abeq}
A\varphi(|x-y|)+\frac{B}{2}\left(|x|^2+|y|^2+t^2\right)\leq |u(x,t)-u(y,t)|\leq 1.
\end{equation}
Therefore, by choosing  $B=33$ we can assure that $x,y\in B_{1/4}$ and $t\in (-1,0]$. Again, we let 
$$
\hat \delta=\frac{x-y}{|x-y|},\quad  \delta =|x-y|.
$$
By choosing 
$$
A>\frac{1}{\frac{1}{2\gamma}^\frac{1}{\gamma-1}\left(1-\frac{1}{2\gamma}\right)}
$$
estimate \eqref{eq:abeq} and Lemma \ref{lem:phi2} imply
\begin{equation}
\label{eq:delta2}
\delta <\left(\frac{1}{2\gamma}\right)^\frac{1}{\gamma-1}<1,\quad  \varphi(\delta)=\delta-\delta^\gamma, \quad  \varphi'(\delta)\geq 1/2,\quad |\varphi''(\delta)|\leq \varphi'(\delta)/\delta.
\end{equation}
From Corollary \ref{cor:loglip}, we know that $u$ is log-Lipschitz in $B_{1/4}\times (-1,0)$, and in particular $C^{2\gamma-2}$. We may therefore use \eqref{eq:abeq} to extract
$$
|x|^2+|y|^2+t^2\leq \frac{2}{B}|u(x,t)-u(y,t)|\leq \frac{2C}{B}|x-y|^{2\gamma-2}, \quad C=C(p,n)
$$
or
\begin{equation}
\label{eq:xyholder}
|x|,|y|,|t|\leq \sqrt{\frac{2C}{B}}|x-y|^{\gamma-1}.
\end{equation}

\noindent {\bf Step 1: Theorem of sums.}
From the parabolic theorem of sums (Theorem 8.3 in \cite{CIL92} and Theorem 9 in \cite{DFO11}) for any $\tau>0$, there are $X,Y\in S(n)$, $\alpha_1$ and $\alpha_2$ such that
\begin{align*}
(\alpha_1,A\varphi'(\delta)\hat\delta,X)\in \overline{\mathcal{P}}_{Q_1}^{2,+}\left(u-\frac{B}{2}|\cdot|^2\right)\,(x,t),\\
 (-\alpha_2,A\varphi'(\delta)\hat\delta,Y)\in \overline{\mathcal{P}}_{Q_1}^{2,-}\left(u+\frac{B}{2}|\cdot|^2\right)\,(y,t),
\end{align*}
$$
\alpha_1+\alpha_2 \geq  Bt,
$$
\begin{equation}
\label{eq:sum1}
-[\tau +||Z||]\begin{bmatrix}
I& 0\\
0 & I\end{bmatrix} \leq 
\begin{bmatrix}
X& 0\\
0 & -Y\end{bmatrix} 
\end{equation}
and
\begin{equation}
\begin{bmatrix}
X& 0\\
0 & -Y\end{bmatrix} \leq \begin{bmatrix}
Z & -Z\\
-Z & Z
\end{bmatrix}+
\frac{1}{\tau}\begin{bmatrix}
Z^2 & -Z^2\\
-Z^2 & Z^2
\end{bmatrix}.
\label{eq:sum2}
\end{equation}
Here
$$
Z=A\varphi''(\delta)\hat \delta\otimes \hat \delta+A\frac{\varphi'(\delta)}{\delta}\left(I-\hat \delta\otimes \hat \delta\right),
$$
$$
Z^2=A^2\left(\varphi''(\delta)\right)^2\hat \delta\otimes \hat \delta+A^2\left(\frac{\varphi'(\delta)}{\delta}\right)^2\left(I-\hat \delta\otimes \hat \delta\right)
$$
and we choose
$$
\tau = 4A\frac{\varphi'(\delta)}{\delta}. 
$$
This implies in particular
\begin{equation}\label{eq:utest2}
(\alpha_1,a,X+BI)\in \overline{\mathcal{P}}_{Q_1}^{2,+}u\,(x,t),\quad (-\alpha_2,b,Y-BI)\in \overline{\mathcal{P}}_{Q_1}^{2,-}u\,(y,t),
\end{equation}
where
$$
a=A\varphi'(\delta)\hat\delta+Bx, \quad b= A\varphi'(\delta)\hat\delta-By.
$$
\noindent {\bf Step 2: Basic estimates.}
Since 
$$
Z=A\left(\varphi''(\delta) -\frac{\varphi'(\delta)}{\delta}\right)\hat \delta\otimes \hat \delta+A\frac{\varphi'(\delta)}{\delta}I
$$
the last inequality in \eqref{eq:delta2} implies
\begin{equation}\label{eq:ZLest}
\|Z\|\leq A\frac{\varphi'(\delta)}{\delta}, \quad \|Z^2\|\leq 4A^2\left(\frac{\varphi'(\delta)}{\delta}	\right)^2
\end{equation}
We now introduce the notation
$$
q=A\varphi'(\delta)\hat\delta.
$$
By choosing $A\geq 200$ and using that $\varphi'(\delta)\geq 1/2$ (from \eqref{eq:delta2}), we may as in the proof of Proposition \ref{prop:loglip}, conclude
\begin{equation}
\label{eq:qLest}
|q|/2\leq a\leq 2|q|, \quad |q|/2\leq b\leq 2|q|.
\end{equation}
By testing \eqref{eq:sum1} and \eqref{eq:sum2} with vectors of the form $(\xi,\xi)$ and $(\xi,0)$, where $\|\xi\| =1$ we obtain that 
\begin{equation}
\label{eq:xynorm}
\|X\|,\|Y\|\leq \max(\|Z\|+\tau,\|Z\|+\frac{1}{\tau}\|Z^2\|)\leq 5A\frac{\varphi'(\delta)}{\delta},\quad X-Y\leq 0,
\end{equation}
where we used \eqref{eq:ZLest} and again that $|\varphi''(\delta)|\leq \varphi'(\delta)/\delta$.\\

\noindent {\bf Step 3: Using the equation.}
From the equation and \eqref{eq:utest2} we obtain the two following inequalities
\begin{align*}
|\alpha_1|^{p-2}\alpha_1&\leq |a|^{p-2}\tr(L(a)(X+BI)),\\
-|\alpha_2|^{p-2}\alpha_2&\geq |b|^{p-2}\tr(L(b)(Y-BI)),
\end{align*}
where
$$
L(v)=I+(p-2)\frac{v\otimes v}{|v|^2}.
$$
Subtracting these inequalities, we obtain
\begin{equation}
\label{eq:viscdiffold}
|\alpha_1|^{p-2}\alpha_1+|\alpha_2|^{p-2}\alpha_2\leq |a|^{p-2}\tr(L(a)(X+BI))- |b|^{p-2}\tr(L(b)(Y-BI)).
\end{equation}
We will now estimate the left hand side from below and the right hand side from above, and obtain a contradiction by choosing $A$ large enough.\\

\noindent {\bf Step 4: Lower bound for the left hand side.} The estimate of the left hand side is identical to the estimate done in Step 4 in the proof of Proposition \ref{prop:loglip}. This together with \eqref{eq:viscdiffold} yields
\begin{equation}
\label{eq:viscdiff}
|a|^{p-2}\tr(L(a)(X+BI))- |b|^{p-2}\tr(L(b)(Y-BI))\geq -C_0|q|^{\frac{(p-2)^2}{p-1}}A^\frac{p-2}{p-1}\left(\frac{\varphi'(\delta)}{\delta}\right)^\frac{p-2}{p-1},
\end{equation}
where $C_0=C_0(n,p)$.\\

\noindent {\bf Step 5: Upper bound for the right hand side.}
We split these terms into three parts
\begin{align*}
&|a|^{p-2}\tr(L(a)(X+BI))- |b|^{p-2}\tr(L(b)(Y-BI))\\
&=|b|^{p-2}\tr(L(b)(X-Y))\\
&+\tr((|a|^{p-2}L(a)-|b|^{p-2}L(b))X)\\
&+|a|^{p-2}\tr(L(a) BI)+|b|^{p-2}\tr (L(b)BI)\\
&=T_1+T_2+T_3.
\end{align*}
\noindent {\bf Step 5a: $T_1$.}
Testing inequality \eqref{eq:sum2} with $(\hat\delta,-\hat \delta)$, we see that by \eqref{eq:delta2} the choice of $\tau$
$$
\hat\delta\cdot (X-Y)\hat\delta\leq 4A\varphi''(\delta)+\frac{4}{\tau}A^2\left(\varphi''(\delta)\right)^2\leq 2A\varphi''(\delta), 
$$
so that at least one of the eigenvalues of $X-Y$ is smaller than $2A\varphi''(\delta)$. From \eqref{eq:xynorm}, we know that the rest are non-positive. Hence, 
\begin{equation}
\label{eq:T1est}
T_1\leq |b|^{p-2}A\varphi''(\delta)\leq CA|q|^{p-2}\varphi''(\delta)=-C_1A|q|^{p-2}\delta^{\gamma-2},\quad C_1=C_1(p),
\end{equation}
where we used \eqref{eq:qLest} and that the smallest eigenvalue of $L$ is $1$.\\

\noindent {\bf Step 5b: $T_2$.}
For $T_2$ we have
\begin{equation}\label{eq:T2est}
\begin{split}
T_2\leq n\|X\|\||a|^{p-2}L(a)-|b|^{p-2}L(b)\|&\leq  C\|X\||q|^{p-3}|B(x+y)|\\
&\leq C'A\frac{\varphi'(\delta)}{\delta}|q|^{p-3}\delta^{\gamma-1}\\
&=C_2|q|^{p-2}\delta^{\gamma-2},
\end{split}
\end{equation}
where $C_2=C_2(p,n)$, and where we used the mean value theorem, the definition of $q$, \eqref{eq:qLest}, \eqref{eq:xynorm} and \eqref{eq:xyholder}. We also note that since 
$$
|a+s(b-a)|=|A\varphi'(\delta)\hat\delta+Bx- sB(x+y)|\ge A\varphi'(\delta)-B|x|-sB|x+y|\ge 100-3B\ge 1,
$$
for $s\in [0,1]$, the line between $a$ and $b$ does not pass through the origin.\\

\noindent {\bf Step 5c: $T_3$.}
For $T_3$ we have
\begin{equation}\label{eq:T3est}
T_3\leq BC(p-1)n|q|^{p-2}\leq C_3|q|^{p-2},\quad C_3=C_3(p,n), 
\end{equation}
where we used \eqref{eq:qLest}. \\

\noindent {\bf Step 6: The contradiction.}
Using \eqref{eq:T1est}--\eqref{eq:T3est} together with \eqref{eq:viscdiff}, we obtain 
$$
-C_0|q|^{\frac{(p-2)^2}{p-1}}A^\frac{p-2}{p-1}\left(\frac{\varphi'(\delta)}{\delta}\right)^\frac{p-2}{p-1}\leq -C_1A|q|^{p-2}\delta^{\gamma-2}+C_2|q|^{p-2}\delta^{\gamma-2}+C_3|q|^{p-2}
$$
or 
$$
0\leq |q|^{p-2}\left((C_2-C_1A)\delta^{\gamma-2}+C_3\right)+C_0|q|^{\frac{(p-2)^2}{p-1}}A^\frac{p-2}{p-1}\left(\frac{\varphi'(\delta)}{\delta}\right)^\frac{p-2}{p-1}.
$$
This is a contradiction if we choose $A$ such that 
$$
0> (C_2-C_1A/2)\delta^{\gamma-2}+C_3,\quad 0> -C_1A/2|q|^{p-2}\delta^{\gamma-2} + C_0|q|^{\frac{(p-2)^2}{p-1}}A^\frac{p-2}{p-1}\left(\frac{\varphi'(\delta)}{\delta}\right)^\frac{p-2}{p-1}
$$
The first inequality holds if we choose $A>2(C_3+C_2)/C_1$ and the second inequality is equivalent to 
$$
A>\frac{2C_0}{C_1}\delta^{\frac{p}{p-1}-\gamma}.
$$
once we recall $|q|=A\varphi'(\delta)$. Since $\delta <(1/(2\gamma))^\frac{1}{\gamma-1}<1$ and $\gamma<p/(p-1)$, it is therefore sufficient to choose 
$$
A>\frac{2C_0}{C_1} 
$$
in order to have the second inequality. All in all, we arrive at a contradiction by choosing
$$
A>\max\left(\frac{2C_0}{C_1},2(C_3+C_2)/C_1\right),
$$
which is a constant depending only on $n$ and $p$.
\end{proof}
That the result above implies the local Lipschitz regularity can be proved exactly as Corollary \ref{cor:loglip}.
\begin{cor}\label{cor:lip} Suppose $u$ is a viscosity solution of \eqref{eq:art} in $Q_2$ such that $\osc_{Q_2} u\leq 1$. Then 
$$
|u(x,t)-u(y,t)|\leq C|x-y|,
$$
for $t\in [-1,0]$ and $x,y\in B_\frac14$. Here $C=C(n,p)$.
\end{cor}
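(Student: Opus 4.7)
The plan is to derive the Lipschitz estimate from the preceding proposition in exactly the same three-step manner as was done for Corollary \ref{cor:loglip}, just replacing the log-Lipschitz modulus by a linear one. The key observation is that the modulus $\varphi$ satisfies $\varphi(r) \le r$, so the proposition effectively gives a Lipschitz-type bound as soon as we can eliminate the quadratic term $\tfrac{B}{2}(|x|^2 + |y|^2 + t^2)$.

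First, I would specialize the inequality in the previous proposition to $t = 0$ and $y = 0$, yielding
$$
|u(x,0) - u(0,0)| \le A\,\varphi(|x|) + \tfrac{B}{2}|x|^2 \le C|x|, \quad x \in B_1,
$$
where $C = C(n,p)$. This gives the Lipschitz estimate, but only anchored at the origin at time zero.

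Second, I would propagate this to an arbitrary center $(z_0, t_0) \in B_1 \times (-1,0)$ by the natural scaling of the equation. Set
$$
v(x,t) := u\!\left(\frac{x}{2} + z_0,\; \frac{t}{2^{p/(p-1)}} + t_0\right),
$$
which is again a viscosity solution of \eqref{eq:art} in $Q_2$ (using that $|\partial_t v|^{p-2}\partial_t v$ and $\Delta_p v$ scale by the same factor $2^{-p}$) and still satisfies $\osc_{Q_2} v \le 1$. Applying the first step to $v$ and undoing the scaling yields
$$
\sup_{x \in B_r(z_0)} |u(x,t_0) - u(z_0, t_0)| \le C\,r, \quad 0 < r < \tfrac{1}{2},
$$
for every such $(z_0,t_0)$.

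Finally, given any $x, y \in B_{1/4}$ and $t_0 \in [-1,0]$, I would apply the midpoint trick with $z = (x+y)/2$, note that $|x-y| < 1/2$ and $z \in B_{1/4} \subset B_1$, and conclude by the triangle inequality that
$$
|u(x,t_0) - u(y,t_0)| \le 2 \sup_{w \in B_{|x-y|}(z)} |u(w,t_0) - u(z, t_0)| \le 2C\,|x-y|.
$$
I do not expect any real obstacle here: the scaling invariance of the PDE and the form of the quadratic penalty make each step essentially automatic once one has the proposition in hand. The only mild nuisance is verifying that the rescaled point $(x/2 + z_0, t/2^{p/(p-1)} + t_0)$ stays in $Q_2$ whenever $(x,t) \in Q_2$ and $(z_0,t_0) \in B_1 \times (-1,0)$, which uses $p \ge 2$ and is identical to the check carried out in Corollary \ref{cor:loglip}.
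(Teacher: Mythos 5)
Your proposal is correct and follows exactly the route the paper intends: the paper's "proof" of Corollary \ref{cor:lip} is just the remark that one argues "exactly as Corollary \ref{cor:loglip}," i.e., specialize the proposition to $t=0$, $y=0$, rescale to an arbitrary center $(z_0,t_0)$, and finish with the midpoint/triangle-inequality step. All the details you supply (the scaling factor $2^{-p}$ on both terms of the equation, the bound $\varphi(r)+\tfrac{B}{2}r^2\le Cr$ on $B_1$, and the containment check for the rescaled cylinder) are accurate.
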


\begin{rem}\label{rem:flex} By a simple covering argument we may also obtain an estimate
$$
|u(x,t)-u(y,t)|\leq C|x-y|,\quad C=C(n,p)
$$
for $(x,t)\in Q_1$, for a solution $u$ in $Q_2$ such that $\osc_{Q_2} u\leq 1$.  

Indeed, we can cover $\overline{Q_1}$ with finitely many cylinders of the form $B_{1/8}(x_i)\times (t_i-1/(2^{p/(p-1}),t_i)$ where $x_i\in B_1$ and $t_i\in (-1,0)$. Corollary \ref{cor:lip} applied to the functions
$$
v_i(x,t)=u(x/2+x_i,1/(2^{p/(p-1})t+t_i),
$$
which are all solutions in $Q_2$, implies
$$
|v_i(x,t)-v_i(y,t)|\leq C|x-y|, \quad x,y\in B_{1/4}, t\in (-1,0).
$$
Going back to $u$ this implies
$$
|u(x,t)-u(y,t)|\leq C|x-y|, \quad x,y\in B_{1/8}(x_i), t\in (t_i-1/(2^{p/(p-1}),t_i),
$$
for any $i$, which implies the desired estimate.
\end{rem}
\section{H\"older regularity in time}\label{sec:time}
In this section we prove H\"older estimates in the $t$-variable. It amounts to constructing a suitable supersolution. See Lemma 3.1 in \cite{IJS16} or Lemma 9.1 in \cite{BBL02} for similar results.
\begin{prop}\label{prop:timereg}
Suppose $u$ is a viscosity solution of \eqref{eq:art} in $Q_2$ such that $\|u\|_{L^\infty(Q_2)}\leq 1$. Then 
$$
|u(0,t)-u(0,s)|\leq C|t-s|^\frac{p-1}{p},
$$
for $t,s\in [-1,0]$.
\end{prop}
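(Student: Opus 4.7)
The plan is to construct an explicit viscosity supersolution $\psi$ that dominates $u$ on the parabolic boundary of a small cylinder centered at $(0,s)$, and then invoke the comparison principle of Proposition~\ref{UsualComparison}. Since the equation is odd in $u$, the same argument applied to $-u$ gives the matching lower bound, so it suffices to show $u(0,t) - u(0,s) \le C(n,p)(t-s)^{(p-1)/p}$ for $s < t$, both in $[-1,0]$. Moreover, once $t-s$ is bounded away from $0$ by a constant $\tau_0 = \tau_0(n,p)$, the estimate is trivially enforced by $\|u\|_{L^\infty(Q_2)} \le 1$, so I would focus on the regime $t-s \le \tau_0$.

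\noindent\textbf{The barrier.} Write $c_0 := u(0,s)$. By Corollary~\ref{cor:lip} together with the covering argument of Remark~\ref{rem:flex}, the spatial Lipschitz bound $|u(x,s) - c_0| \le M|x|$ holds on $B_1$ with $M = M(n,p)$. Guided by the natural $(p-1)/p$-scaling of \eqref{eq:art}, I would take
\[
\psi(x,\tau) \;=\; c_0 \;+\; A\bigl((\tau - s) + c\,|x|^{p/(p-1)}\bigr)^{(p-1)/p},
\]
where $A=A(n,p)$ is to be taken large and $c=c(n,p)$ small. Setting $\Phi := (\tau - s) + c|x|^{p/(p-1)}$, a direct classical calculation (valid wherever $x \ne 0$) yields
\[
|\partial_\tau \psi|^{p-2}\partial_\tau \psi \;=\; \Bigl(\tfrac{p-1}{p}A\Bigr)^{p-1}\Phi^{-(p-1)/p}, \qquad \Delta_p \psi \;=\; (Ac)^{p-1}\Phi^{-(p-1)/p}\Bigl(n - \tfrac{c|x|^{p/(p-1)}}{\Phi}\Bigr),
\]
so $\psi$ is a classical supersolution of \eqref{eq:art} in $\{x \ne 0\}$ as soon as $\bigl(\tfrac{p-1}{p}\bigr)^{p-1} \ge n\,c^{p-1}$, which fixes $c$ depending only on $n$ and $p$.

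\noindent\textbf{Ordering, viscosity check, and conclusion.} I would compare in the cylinder $Q := B_1(0) \times (s, s+\tau_0]$ for a fixed small $\tau_0 = \tau_0(n,p)$. On the bottom face $\psi(x,s) - c_0 = A\,c^{(p-1)/p}|x|$, so the Lipschitz bound gives $\psi(\cdot, s) \ge u(\cdot, s)$ provided $A\,c^{(p-1)/p} \ge M$; on the lateral face $\psi(x,\tau) - c_0 \ge A\,c^{(p-1)/p}$ exceeds $2\|u\|_{L^\infty(Q_2)}$ provided $A\,c^{(p-1)/p} \ge 2$. Both are enforced by enlarging $A$, still depending only on $n$ and $p$. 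The viscosity supersolution property at the singular point $x = 0$ (where $\psi$ is only $C^1$ for $p>2$, and smooth for $p=2$) requires a separate argument: the crucial observation is that $p/(p-1) \le 2$ for $p \ge 2$, which forces every subjet $(\alpha,a,X) \in \overline{\mathcal{P}}^{2,-}\psi(0,\tau)$ to have $a = 0$, whereupon the viscosity inequality collapses to $\alpha \ge 0$, which holds since $\partial_\tau \psi(0,\tau) > 0$. With this in place, Proposition~\ref{UsualComparison} yields $u \le \psi$ throughout $Q$; evaluating at $(0,t)$ gives $u(0,t) - u(0,s) \le A(t-s)^{(p-1)/p}$ as required. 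The delicate step is the verification of the supersolution condition at the axis $x = 0$; this is exactly where the specific scaling exponent $p/(p-1)$ in the spatial part of $\psi$ pays off, since it is this exponent that simultaneously makes the classical computation succeed and renders the viscosity test at the singular point trivial.
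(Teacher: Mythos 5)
Your proof is correct, and it follows a genuinely different route from the paper's, though both are barrier-plus-comparison arguments resting on the spatial Lipschitz estimate of Corollary~\ref{cor:lip} and Remark~\ref{rem:flex}. The paper uses the two-parameter family $\phi = \eta + A(t-t_0) + B|x|^{p/(p-1)}$, linear in time: for each fixed offset $\eta$ it chooses $B\sim \eta^{-1/(p-1)}$ so that the bottom-face inequality holds, and the H\"older exponent $(p-1)/p$ only emerges at the end by optimizing $\eta \sim |t-t_0|^{(p-1)/p}$. Your single self-similar barrier $c_0 + A\bigl((\tau-s)+c|x|^{p/(p-1)}\bigr)^{(p-1)/p}$ encodes the exponent from the start, which makes the conclusion immediate once comparison is established and, to my mind, makes the scaling mechanism more transparent; the price is a slightly more involved supersolution computation (the cancellation $\bigl(\tfrac{p-1}{p}\bigr)^{p-1}\ge nc^{p-1}$ in place of $A^{p-1}\ge (\tfrac{p}{p-1})^{p-1}nB^{p-1}$). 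One point in your favor: you explicitly verify the viscosity supersolution property on the axis $x=0$, where $|x|^{p/(p-1)}$ fails to be $C^2$ for $p>2$; your observation that every element of $\overline{\mathcal{P}}^{2,-}\psi(0,\tau)$ has vanishing spatial gradient (because $\psi(\cdot,\tau)-\psi(0,\tau)$ is nonnegative and $o(|x|)$), so that the test collapses to $\alpha\ge 0$, is exactly right, and the paper's barrier needs the same remark even though it is not made there. Two cosmetic matters: the hypothesis $\|u\|_{L^\infty(Q_2)}\le 1$ gives $\osc_{Q_2}u\le 2$ rather than $\le 1$, so strictly one should apply Corollary~\ref{cor:lip} to $u/2$ (the paper has the same slip); and the restriction to $t-s\le\tau_0$ is harmless but unnecessary, since your boundary and supersolution checks hold on all of $B_1\times(s,0]$.
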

\begin{proof} Fix $t_0\in (-1,0)$. We claim that the following estimate holds
\begin{equation}\label{tineq}
u(x,t)-u(0,t_0)\leq\phi(t,x):= \eta +A(t-t_0)+B|x|^\frac{p}{p-1}, 
\end{equation}
for $t\in [t_0,0]$, $x\in  B_1$, whenever $A$, $B$ and $\eta$ satisify
\begin{equation}
\label{eq:AB}
A=\left(\frac{p}{p-1}\right)n^\frac{1}{p-1}B, \quad B^{p-1}=\max\left( \frac{1}{p}\left(\frac{p-1}{p}\right)^{p-1}\frac{\|\nabla u\|_{L^\infty(Q_1)}^p}{\eta},2^{p-1}\right),
\end{equation}
This is accomplished by making $\phi$ a supersolution and applying the comparison principle.

We first remark that for $x\in \partial B_1$, \eqref{tineq} reads
$$
u(x,t)-u(0,t_0)\leq \eta+A(t-t_0)+B, 
$$
which clearly holds if $B\geq 2$. In addition, when $t=t_0$ \eqref{tineq} reduces to
\begin{equation}
\label{eq:parabola1}
u(x,t_0)-u(0,t_0)\leq \eta+B|x|^\frac{p}{p-1}.
\end{equation}
By Corollary \ref{cor:lip} and Remark \ref{rem:flex}, we know that $u$ is Lipschitz in space in $Q_1$. Thus
$$
|u(x,t_0)-u(0,t_0)|\leq \|\nabla u\|_{L^\infty(Q_1)}|x|,\quad \|\nabla u\|_{L^\infty(Q_1)}<C(n,p).
$$
Hence, \eqref{eq:parabola1} is valid if
$$
\|\nabla u\|_{L^\infty(Q_1)}|x|\leq \eta+B|x|^\frac{p}{p-1}
$$
which holds if\footnote{Find the min of this radial function.}
$$
B^{p-1}\geq \frac{1}{p}\left(\frac{p-1}{p}\right)^{p-1}\frac{\|\nabla u\|_{L^\infty(Q_1)}^p}{\eta},
$$
which holds due to \eqref{eq:AB}. We have thus settled that \eqref{tineq} holds on the parabolic boundary of $B_1\times [t_0,0]$. We now see that 
$$
|\partial_t \phi|^{p-2}\partial_t \phi-\Delta_p \phi = A^{p-1}-\left(\frac{p}{p-1}\right)^{p-1}B^{p-1}n\geq 0
$$
since $A= \left(\frac{p}{p-1}\right)n^\frac{1}{p-1}B$. Therefore, $\phi$ is a supersolution and \eqref{tineq} holds in $B_1\times [t_0,0]$ by the comparison principle (Proposition \ref{UsualComparison}), given that \eqref{eq:AB} is satisfied.

In order to prove the assertion, we choose $\eta = \|\nabla u\|_{L^\infty(Q_1)}|t-t_0|^\frac{p-1}{p}$. We note that \eqref{eq:AB} implies with this choice of $\eta$ that
$$
B\leq  \frac{1}{p^\frac{1}{p-1}}\frac{p-1}{p}\frac{\|\nabla u\|_{L^\infty(Q_1)}^\frac{p}{p-1}}{\eta^\frac{1}{p-1}}+2^{p-1}\leq \frac{1}{p^\frac{1}{p-1}}\frac{p-1}{p}\frac{\|\nabla u\|_{L^\infty(Q_1)}}{|t-t_0|^\frac{1}{p}}+2^{p-1}.
$$
Then \eqref{tineq} and \eqref{eq:AB} imply
\begin{align*}
u(0,t)-u(0,t_0)&\leq \|\nabla u\|_{L^\infty(Q_1)}|t-t_0|^\frac{p-1}{p}+\left(\frac{p}{p-1}\right)n^\frac{1}{p-1}B(t-t_0)\\
&\leq C(1+\|\nabla u\|_{L^\infty(Q_1)}+|t-t_0|^\frac{1}{p})|t-t_0|^\frac{p-1}{p}, \\
&\leq C|t-t_0|^\frac{p-1}{p},\quad C=C(n,p).
\end{align*}
Since this holds for $u$ and $-u$, the reverse inequality also holds.
\end{proof}
\begin{cor}\label{cor:timereg} Suppose $u$ is a viscosity solution of \eqref{eq:art} in $Q_2$ such that $\osc_{Q_2} u\leq 1$. Then 
$$
|u(x,t)-u(x,s)|\leq C|t-s|^\frac{p-1}{p},
$$
for $t,s\in [-1/2^\frac{p}{p-1},0]$ and $x,y\in B_1$. Here $C=C(n,p)$.
\end{cor}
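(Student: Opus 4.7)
The plan is to obtain Corollary \ref{cor:timereg} as a localized version of Proposition \ref{prop:timereg}, by the same rescaling/translation argument that produced Corollary \ref{cor:loglip} from Proposition \ref{prop:loglip}.

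First I would normalize $u$ to meet the hypothesis of Proposition \ref{prop:timereg}. Since equation \eqref{eq:art} is invariant under adding a constant to the solution, replacing $u$ by $u-\inf_{Q_2}u$ leaves differences of the form $u(x,t)-u(x,s)$ unchanged and yields $0\le u\le \osc_{Q_2}u\le 1$ on $Q_2$; in particular $\|u\|_{L^\infty(Q_2)}\le 1$.

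Next, for an arbitrary $x_0\in B_1$, I would exploit the natural parabolic scaling of \eqref{eq:art} and introduce
$$
v(y,\tau):=u\!\left(\tfrac{y}{2}+x_0,\ \tfrac{\tau}{2^{p/(p-1)}}\right),
$$
which is again a viscosity solution of \eqref{eq:art}. The range check is routine: $(y,\tau)\in Q_2$ gives $\tfrac{y}{2}+x_0\in B_2$ (since $|x_0|\le 1$) and $\tfrac{\tau}{2^{p/(p-1)}}\in(-1,0]\subset(-2^{p/(p-1)},0]$, so $v$ solves the equation on all of $Q_2$ with $\|v\|_{L^\infty(Q_2)}\le\|u\|_{L^\infty(Q_2)}\le 1$.

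Finally, Proposition \ref{prop:timereg} applied to $v$ yields
$$
|v(0,\tau)-v(0,\sigma)|\le C(n,p)|\tau-\sigma|^{(p-1)/p},\qquad \tau,\sigma\in[-1,0].
$$
Setting $t=\tau/2^{p/(p-1)}$ and $s=\sigma/2^{p/(p-1)}$ (so $t,s\in[-1/2^{p/(p-1)},0]$) and using $|\tau-\sigma|^{(p-1)/p}=2|t-s|^{(p-1)/p}$, this becomes $|u(x_0,t)-u(x_0,s)|\le C(n,p)|t-s|^{(p-1)/p}$ after absorbing the factor $2$ into $C$. Since $x_0\in B_1$ is arbitrary, this is the assertion. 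I do not expect any genuine obstacle: all the analytic work is already contained in Proposition \ref{prop:timereg}, and the present step amounts to scale-invariance bookkeeping, exactly parallel to the passage from Proposition \ref{prop:loglip} to Corollary \ref{cor:loglip}.
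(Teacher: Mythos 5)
Your proposal is correct and is essentially the paper's own argument: the same translation--rescaling $v(y,\tau)=u(y/2+x_0,\tau/2^{p/(p-1)})$ followed by an application of Proposition \ref{prop:timereg} at the origin. Your extra normalization step (subtracting $\inf_{Q_2}u$ to pass from the oscillation bound to $\|u\|_{L^\infty(Q_2)}\le 1$) is a small point the paper glosses over, and it is handled correctly.
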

\begin{proof}
Let $z_0\in B_1$ and define
$$
v(x,t):=u\left(\frac{x}{2}+z_0,\frac{t}{2^\frac{p}{p-1}}\right).
$$
Then $v$ is a solution of \eqref{eq:art} in $Q_2$. By construction, we also have
\[
\osc_{Q_2} v\leq 1.
\] 
We may therefore apply Proposition \ref{prop:timereg} to $v$ and obtain
$$
|v(0,t)-v(0,s)|\leq C|s-t|^\frac{p-1}{p}, \quad s,t\in [-1,0].
$$
In terms of $u$ this implies
$$
|u(z_0,\frac{t}{2^\frac{p}{p-1}})-v(z_0,\frac{s}{2^\frac{p}{p-1}})|\leq C|s-t|^\frac{p-1}{p}, \quad s,t\in [-1,0]
$$
which is the desired result, upon renaming $C$.
\end{proof}
\section{Proof of the regularity theorem}\label{sec:main}
We have now everything needed for the proof Theorem \ref{thm:main}.

\begin{proof}[~Proof of Theorem \ref{thm:main}] Define 
$$
u_R(x,t)=\frac{u(Rx+x_0,R^\frac{p}{p-1} t+t_0)}{2\|u\|_{L^\infty(Q_{2R}(x_0,t_0))}}.
$$
Then $u_R$ solves \eqref{eq:art} in $Q_2$ and $\osc_{Q_2} u\leq 1$. From Remark \ref{rem:flex} and Corollary \ref{cor:timereg} we obtain 
$$
|u_R(x,t)-u_R(y,t)|\leq C|x-y|,\quad |u_R(x,t)-u_R(x,s)|\leq C|t-s|^\frac{p-1}{p},
$$
for $x,y\in B_1, s,t\in (-1/2^\frac{p}{p-1},0)$. Coming back to $u$, this means
$$
|u(x,t)-u(y,t)|\leq C\|u\|_{L^\infty(Q_{2R}(x_0,t_0))}\frac{|x-y|}{R}\\
$$
and
$$
|u(x,t)-u(x,s)|\leq C\|u\|_{L^\infty(Q_{2R}(x_0,t_0))}\frac{|t-s|^\frac{p-1}{p}}{R}
$$
for all $x,y\in B_R(x_0)$ and $s,t\in (t_0-(R/2)^\frac{p}{p-1},t_0)$. The desired result now follows from the triangle inequality.
\end{proof}

\section{The large time behavior}\label{sec:largetime}
In \cite{HL16}, the unique viscosity solution of \eqref{pParabolic} is constructed. It is proved that this is also a weak solution. In addition, the large time behavior of weak solutions (which thus also applies to viscosity solutions) is characterized:
\begin{thm}\label{LargeTthm} Assume $g\in W^{1,p}_0(\Omega)$. Then for any weak solution $u$ of \eqref{pParabolic}, the limit 
$$
w:=\lim_{t\rightarrow \infty}e^{\lambda_p^\frac{1}{p-1} t}u(\cdot,t)
$$ exists in $W^{1,p}_0(\Omega)$ and is a $p$-ground state, provided $w\not\equiv 0$. In this case, $u(\cdot, t)\not\equiv 0$ for $t\ge 0$ and 
$$
\lambda_p=\lim_{t\rightarrow \infty}\frac{\displaystyle\int_\Omega|\nabla u(x,t)|^pdx}{\displaystyle\int_\Omega|u(x,t)|^pdx}. 
$$
\end{thm}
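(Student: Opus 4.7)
I would structure the proof around three ingredients: an energy identity, a rescaling that turns the asymptotic problem into an autonomous flow whose rest points are ground states, and a Lyapunov/compactness argument that identifies the limit.

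\medskip
\noindent\textbf{Step 1: Energy identity.} Formally testing \eqref{pParabolic} with $\partial_t u$ and integrating by parts over $\Omega$ (using the zero boundary condition) yields
$$\int_\Omega |\partial_t u|^p\,dx \;=\; -\frac{1}{p}\frac{d}{dt}\int_\Omega|\nabla u|^p\,dx.$$
Thus the Dirichlet energy $t\mapsto \int_\Omega|\nabla u(\cdot,t)|^p\,dx$ is non-increasing, and $\int_0^\infty\!\!\int_\Omega|\partial_t u|^p\,dx\,dt \le \tfrac{1}{p}\|\nabla g\|_{L^p}^p$. (A rigorous version is carried out at the level of the implicit-in-time Euler scheme used to construct the weak solution, so that the manipulations above are justified in the limit.)

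\medskip
\noindent\textbf{Step 2: Rescaling and Lyapunov functional.} Set $\mu:=\lambda_p^{1/(p-1)}$ and $v(x,t):=e^{\mu t}u(x,t)$. Because the time operator is $(p-1)$-homogeneous and $\Delta_p$ is $(p-1)$-homogeneous in $v$, a direct computation gives the autonomous rescaled equation
$$|\partial_t v-\mu v|^{p-2}(\partial_t v-\mu v)=\Delta_p v.$$
Its stationary solutions satisfy $-\Delta_p\phi=\lambda_p|\phi|^{p-2}\phi$, i.e.\ they are precisely the $p$-ground states (together with $\phi\equiv 0$). I would then exhibit the natural Lyapunov candidate
$$F(v):=\tfrac{1}{p}\int_\Omega|\nabla v|^p\,dx-\tfrac{\lambda_p}{p}\int_\Omega|v|^p\,dx,$$
and show, by testing the rescaled equation against $\partial_t v$, that $\tfrac{d}{dt}F(v(\cdot,t))\le 0$, with equality only if $\partial_t v \equiv \mu v$, in which case $u=e^{-\mu t}\phi$ for a ground state $\phi$. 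Combining $F\ge 0$ (by Poincaré with constant $\lambda_p$) with monotonicity of $F$ and the Poincaré inequality then gives a uniform $W^{1,p}_0$ bound on the trajectory $\{v(\cdot,t)\}_{t\ge 0}$.

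\medskip
\noindent\textbf{Step 3: Compactness and identification of the limit.} By the uniform $W^{1,p}_0$ bound, Rellich--Kondrachov makes the orbit pre-compact in $L^p(\Omega)$. For any sequence $t_k\to\infty$ we extract a weak $W^{1,p}_0$ (and strong $L^p$) limit $w$; passing to the limit in the rescaled equation using the energy estimate $\int|\partial_t v-\mu v|^p\,dx\to 0$ (a consequence of $F$ being bounded below and monotone, so its dissipation is integrable in time), one shows $w$ is stationary, hence a ground state or $0$. Finally, to upgrade subsequential convergence to genuine convergence in $W^{1,p}_0$, one combines: (i) the fact that $F(v(\cdot,t))$ has a limit $F_\infty$, so every subsequential limit attains the same energy $F_\infty$; (ii) uniform convexity of $\int|\nabla\cdot|^p$ on the level set $\{F=F_\infty\}$ to pass from weak to strong $W^{1,p}_0$-convergence; (iii) a Łojasiewicz-type inequality near the set of ground states (or, equivalently, uniqueness up to scalar of ground states on the relevant spectral component) to force the whole trajectory, not just a subsequence, to converge.

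\medskip
\noindent\textbf{Step 4: Rayleigh quotient.} If $w\not\equiv 0$, then since $w$ is a ground state, $\int|\nabla w|^p=\lambda_p\int|w|^p$. The Rayleigh quotient is invariant under the multiplicative rescaling $u\mapsto e^{\mu t}u$, so the ratio computed on $u(\cdot,t)$ equals the one computed on $v(\cdot,t)$, and the $W^{1,p}_0$-convergence $v(\cdot,t)\to w$ (with $w\not\equiv 0$) gives convergence of the quotient to $\lambda_p$. In particular $u(\cdot,t)\not\equiv 0$ for all $t\ge 0$, since a vanishing solution at some time would contradict the non-triviality of the limit via backward uniqueness/comparison. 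The main obstacle I anticipate is Step 3(iii): in the absence of a simple-eigenvalue structure or known Łojasiewicz inequality for $-\Delta_p$, one must work carefully to rule out a continuum of $\omega$-limit points; this is where the argument leans most heavily on the variational structure exposed by the Lyapunov functional $F$.
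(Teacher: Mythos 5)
First, a structural remark: this paper does not actually prove Theorem \ref{LargeTthm} --- it is imported verbatim from \cite{HL16} (the text preceding it says the large time behavior ``is characterized'' there), so there is no in-paper proof to compare against. Your overall strategy (energy identity, rescaling $v=e^{\mu t}u$ with $\mu=\lambda_p^{1/(p-1)}$, Lyapunov functional, compactness) is the same circle of ideas as the cited proof, but as written the proposal contains gaps that would derail it.

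The most serious is in Step 3: the quantity that tends to zero is $\partial_t v$, not $\partial_t v-\mu v$. Your own dissipation identity reads
$$
\frac{d}{dt}F(v)=-\int_\Omega\bigl(|w|^{p-2}w-|{-\mu v}|^{p-2}(-\mu v)\bigr)\bigl(w-(-\mu v)\bigr)\,dx,\qquad w:=\partial_t v-\mu v,
$$
which by strong monotonicity of $s\mapsto|s|^{p-2}s$ (valid for $p\ge2$) is $\le -c_p\int_\Omega|w+\mu v|^p\,dx=-c_p\int_\Omega|\partial_t v|^p\,dx$. So equality forces $\partial_t v\equiv0$ (not $\partial_t v\equiv\mu v$), and the integrable dissipation is $\int_0^\infty\!\!\int_\Omega|\partial_t v|^p\,dx\,dt<\infty$. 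This matters: if one really had $\int_\Omega|\partial_t v-\mu v|^p\,dx\to0$, the rescaled equation would yield $\Delta_p w=0$ with $w\in W^{1,p}_0(\Omega)$, i.e.\ $w\equiv0$ --- the wrong limit equation; with $\partial_t v\to0$ one instead gets $-\Delta_p w=\lambda_p|w|^{p-2}w$ as desired. Second, the uniform $W^{1,p}_0$ bound on the orbit does not follow from ``$F\ge0$, $F$ decreasing, Poincar\'e'': $F$ is the \emph{difference} of the two terms you need to bound, and Poincar\'e controls $\int|v|^p$ by $\int|\nabla v|^p$, not conversely. One needs the separate monotone quantity $t\mapsto e^{p\mu t}\int_\Omega|\nabla u|^p\,dx$, obtained by testing the equation with $u$ and combining H\"older with Poincar\'e to get $\int|\partial_t u|^p\ge\mu\int|\nabla u|^p$. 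Third, the passage from subsequential to full convergence --- which you correctly flag as the crux --- is not closed: no \L{}ojasiewicz inequality for the $p$-Dirichlet energy is available, and simplicity of $\lambda_p$ (which anyway requires $\Omega$ connected) fixes at most the ray of the limit. Finally, ``backward uniqueness'' is neither available nor needed for $u(\cdot,t)\not\equiv0$: if $u(\cdot,t_0)\equiv0$, forward comparison with the zero solution gives $u\equiv0$ for $t\ge t_0$, hence $w\equiv0$.
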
 
We now have all the ingredients needed for the proof of Theorem \ref{thm:largetime}.
\begin{proof}[~Proof of Theorem \ref{thm:largetime}] Let $\tau_k$ be an increasing sequence of positive numbers such that $\tau_k\to \infty$ as $k\rightarrow\infty$. Since any viscosity solution is also a weak solution, Theorem \ref{LargeTthm} establishes that 
\begin{equation}\label{tauconv}
\lim_{k\rightarrow \infty}e^{\lambda_p^\frac{1}{p-1} \tau_k}v(\cdot,\tau_k)=w,
\end{equation}
in $W_0^{1,p}(\Omega)$. We also know that $w$ does not depend on the sequence $\tau_k$. It is therefore enough to prove that this seqeuence has a subseqence that convergences uniformly to $w$ on $\overline{\Omega}$. 
Define 
$$
v^k(x,t)=e^{\lambda_p^\frac{1}{p-1} \tau_k}v(x,t+\tau_k).
$$
We remark that $e^{-\lambda_p^\frac{1}{p-1} t}\phi $ is a solution of equation \eqref{eq:art}.
By the comparison principle, 
\begin{equation}\label{vkest}
|v^k(x,t)|\leq \phi(x)
\end{equation}
for $(x,t)\in \Omega\times [-1,1]$ for all $k\in \N$ large enough. These bounds together with Theorem \ref{thm:main} give that $v^k$ is uniformly bounded in $C^\alpha(B\times [0,1])$ for any ball $B\subset\subset \Omega$ for $\alpha\leq (p-1)/p$. By a routine covering argument, $v^k$ is then uniformly bounded in $C^\alpha(K\times [0,1])$ for any compact $K\subset\subset \Omega$. 
Since $\phi$ is continuous up to the boundary of $\Omega$, \eqref{vkest} together with these local estimates implies that the sequence $v^k$ is equicontinuous on $\overline\Omega \times [0,1]$ (see for instance the proof of Theorem 1.3 in \cite{HL16b} for details). By the Arzel\`a-Ascoli theorem, we can extract a subsequence $v^{k_j}$ such that  
$$
v^{k_j}\to e^{-\lambda_p^\frac{1}{p-1}t}w,
$$
uniformly on $\overline\Omega \times [0,1]$. Letting $t=0$, this establishes the desired existence of a uniformly convergent subsequence.
\end{proof}

\bibliographystyle{amsrefs}
\bibliography{ref.bib}
\end{document}